\newtheorem{theorem}{Theorem}[section]
\newtheorem{proposition}{Proposition}[section]
\newtheorem{lemma}[theorem]{Lemma}
\theoremstyle{definition}
\newtheorem{definition}[theorem]{Definition}
\theoremstyle{remark}
\numberwithin{equation}{section}
\numberwithin{figure}{section}
\newlength\cellsize \setlength\cellsize{15\unitlength}
\newcommand\cellify[1]{\def\thearg{#1}\def\nothing{}%
\ifx\thearg\nothing
\vrule width0pt height\cellsize depth0pt\else
\hbox to 0pt{\usebox2\hss}\fi%
\vbox to 15\unitlength{
\vss
\hbox to 15\unitlength{\hss$#1$\hss}
\vss}}
\newcommand\tableau[1]{\vtop{\let\\=\cr
\setlength\baselineskip{-16000pt}
\setlength\lineskiplimit{16000pt}
\setlength\lineskip{0pt}
\halign{&\cellify{##}\cr#1\crcr}}}
\newcommand\expath[1]{%
\hbox to 0pt{\usebox3\hss}%
\vbox to 15\unitlength{
\vss
\hbox to 15\unitlength{\hss$#1$\hss}
\vss}}
\DeclareMathOperator{\rea}{read}
\DeclareMathOperator{\qschur}{\mathcal{S}}
\DeclareMathOperator{\rowq}{\mathcal{RS}}
\DeclareMathOperator{\comts}{{\rm composition \; tableaux}}
\DeclareMathOperator{\func}{{\rm row-strict \; quasisymmetric \; Schur \; function}}
\newcommand{\shape}{\bm\lambda}
\newcommand{\setcomp}{\bm\beta}
\title{Row-strict quasisymmetric Schur functions}
\author{Sarah Mason}
\address[Mason]{
   Department of Mathematics\\
   Wake Forest University \\
   Winston-Salem, NC 27109 USA
   }
\email{masonsk@wfu.edu}
\author{Jeffrey Remmel}
\address[Remmel]{
   Department of Mathematics\\
   University of California, San Diego\\
   La Jolla, CA 92093-0112. USA\\
   }
\email{jremmel@ucsd.edu}
\begin{document}

\begin{abstract}
Haglund, Luoto, Mason, and van Willigenburg introduced  
a basis for quasisymmetric functions called the {\it quasisymmetric Schur function basis}, generated combinatorially through fillings of composition diagrams in much the same way as Schur functions are generated through reverse column-strict tableaux. We introduce   
a new basis for quasisymmetric functions called the {\it row-strict quasisymmetric Schur function basis}, generated combinatorially through fillings of composition diagrams in much the same way as Schur functions are generated through row-strict tableaux. We describe the relationship between this new basis and other known bases for quasisymmetric functions, as well as its relationship to Schur polynomials.  We obtain a refinement of the omega transform operator as a result of these relationships. 
\end{abstract}

\maketitle

\section{Introduction}

Quasisymmetric functions have emerged as a powerful tool for investigating many diverse areas such as symmetric functions \cite{BMSvW00, BTvW06}, combinatorial Hopf algebras \cite{ABS06}, discrete geometry \cite{BHvW03}, and representation theory \cite{Hiv00, KroThi99}. Quasisymmetric functions were introduced by Gessel as a source of generating functions for $P$-partitions \cite{Ges84}, although they appeared in a different format in earlier work by Stanley~\cite{Sta72}.  Gessel developed many properties of quasisymmetric functions and applied them to 
solve a number of problems in permutation enumeration. Gessel also proved that they were dual to Solomon's descent algebra.  This duality is further explored by Ehrenborg \cite{Ehr96}, Malvenuto and Reutenauer \cite{MalReu95}, and Thibon \cite{Thi01}.  

In \cite{HLMvW09}, Haglund, Luoto, Mason, and van Willigenburg  introduced  
a new basis for quasisymmetric functions called the {\it quasisymmetric Schur functions} which are generated combinatorially through fillings of composition diagrams in much the same way as Schur functions are generated through 
reverse column-strict tableaux. Each quasisymmetric Schur function 
is a positive sum of Demazure atoms.   In \cite{HLMvW09}, it 
was shown that the quasisymmetric Schur functions refine the Schur functions in a way that respects the Schur function decomposition into Gessel's fundamental quasisymmetric functions. In \cite{HLMvW10}, Haglund, Luoto, Mason,  
and van Willigenburg gave a refinement of the Littlewood-Richardson 
rule which proved that the product of a quasisymmetric Schur function and a Schur function expands positively as a sum of quasisymmetric Schur functions.  

This paper was motivated by an attempt to extend the duality between 
column-strict tableaux and row-strict tableaux to 
quasisymmetric Schur functions.\footnote{An extended abstract of the this paper appeared in \cite{MR2011}}  That is, 
let $\lambda = (\lambda_1, \lambda_2, \hdots , \lambda_k)$ be a partition of 
$n$.  The diagram associated to $\lambda$ (in English notation) consists of $k$ rows of left-justified boxes, or {\it cells}, such that the $i^{th}$ row from the top contains $\lambda_i$ cells.  A {\em reverse column-strict tableau $T$ 
of shape $\lambda$} is a filling of the cells of $\lambda$ with positive integers so that the rows are weakly decreasing and the columns are 
strictly decreasing.  A {\em reverse row-strict tableau $T$ 
of shape $\lambda$} is a filling of the cells of $\lambda$ with positive integers so that the rows are strictly decreasing and the columns are 
weakly  decreasing.  Let $\mathcal{RCS}_\lambda$ (resp. $\mathcal{RRS}_\lambda$) 
denote the set of all reverse column strict tableaux (resp. reverse row 
strict tableaux) of shape $\lambda$. 
If $T$ is a reverse column-strict tableau or 
a reverse row-strict tableau, we let $T(i,j)$ be the element in 
the cell which is in the $i$-th row of $T$, reading from top to bottom, 
and the $j$-th column of $T$, reading from left to right, and 
we let the weight, $x^T$,  of $T$ be defined as 
$\displaystyle{x^T = \prod_{(i,j) \in \lambda} x_{T(i,j)}}$. Then the 
Schur function $s_\lambda(x_1,x_2, \ldots)$ is defined as  
\begin{equation}\label{defschur1}
s_\lambda(x_1,x_2, \ldots) = \sum_{T \in \mathcal{RCS}_\lambda} x^T.
\end{equation}
If $T$ is a reverse column-strict tableau or 
a reverse row-strict tableau of shape $\lambda$, we define 
the conjugate, $T^\prime$, of $T$ to be the filled diagram of shape $\lambda^\prime$ which 
results by reflecting the cells of $T$ across the main diagonal. Clearly 
$T$ is a reverse column-strict tableaux if and only if 
$T^\prime$ is a reverse-row strict tableaux.  Thus 
 \begin{equation}\label{defschur2}
s_{\lambda^\prime}(x_1,x_2, \ldots) = \sum_{T \in \mathcal{RRS}_\lambda} x^T
\end{equation}
where $\lambda'$ is the transpose of the partition $\lambda$, often referred to as conjugate partition~\cite{Hal59, Hal88, Sta71}.
Moreover, if $\omega$ is the algebra isomorphism defined 
on the ring of symmetric functions $\Lambda$ so that 
$\omega(h_n) = e_n$, where $h_n= h_n(x_1,x_2, \ldots)$ is $n$-th homogeneous symmetric function 
and $e_n= e_n(x_1,x_2, \ldots)$ is $n$-th elementary symmetric function, then 
it is well known that   
\begin{equation}\label{omegaonschur}
\omega(s_\lambda(x_1,x_2, \ldots)) = 
s_{\lambda^\prime}(x_1,x_2, \ldots).
\end{equation}

The question that motivated this paper is whether we 
can find a duality like that expressed in 
(\ref{defschur1}), (\ref{defschur2}), and (\ref{omegaonschur}) for quasisymmetric Schur functions. 
In this paper, we introduce a new basis for 
quasisymmetric functions called the {\em row-strict quasisymmetric 
Schur functions}, which are generated combinatorially through fillings of composition diagrams in much the same way as Schur functions are generated through 
reverse row-strict tableaux. However, 
the process of conjugation becomes less transparent in the quasisymmetric setting since bases for quasisymmetric functions are typically indexed by compositions instead of partitions.  That is, it is not enough to simply reflect across the main diagonal since this does not necessarily produce a left-justified diagram.  In fact, the number of compositions which are rearrangements of  a given partition is generally not equal to the number of compositions which are rearrangements of  its transpose so that any relationship between these two collections must necessarily be more complex than a simple bijection.  There is a refinement of $\omega$ transformation which is 
defined on the space of quasisymmetric functions and we shall 
use this refinement to better understand the relationship between the compositions rearranging a partition and those rearranging its conjugate.

The outline of this paper is as follows. 
In Section {\ref{classical}}, we shall briefly review the background 
on symmetric and quasisymmetric functions that we shall need.   
In Section {\ref{dualbasis}}, we shall recall the definition of the quasisymmetric Schur function $\qschur_{\alpha}$ and 
we shall define the  {\it row-strict quasisymmetric Schur functions} $\rowq_{\alpha}$. We shall also describe their relationship to quasisymmetric Schur functions and Schur functions.  In Section \ref{properties}, we shall 
give combinatorial interpretations to the coefficients that 
arise in the expansion of $\rowq_{\alpha}$ in terms of both 
the monomial quasisymmetric functions and the fundamental quasisymmetric 
functions. This will allow us to prove that the row-strict quasisymmetric 
functions are a basis for the quasisymmetric functions. In Section \ref{omega},
we show that the $\omega$ involution on quasisymmetric functions is the involution which interpolates between the quasisymmetric Schur functions and the row-strict quasisymmetric Schur functions. Finally in Section \ref{Dual}, 
we shall define an extension of the dual Schensted insertion procedure 
that is appropriate for row-strict quasisymmetric functions and 
prove some its fundamental properties. This extension 
is used by Ferreira~\cite{Fer11} to prove a refinement of the Littlewood-Richardson 
rule to give a combinatorial interpretation of the product 
of a Schur function times a row-strict quasisymmetric Schur function 
as a positive sum of row-strict quasisymmetric functions.

\section{Symmetric and quasisymmetric functions}{\label{classical}}

A {\it symmetric function} is a bounded degree formal power series $f(x) \in \mathbb{Q}[[x_1,x_2, \hdots ]]$ such that $f(x)$ is fixed under the action of the symmetric group; that is, $\sigma(f(x)) = f(x)$ for all $\sigma \in \mathfrak{S}_{\infty}$ where $\sigma(f(x_1, x_2, \hdots))=f(x_{\sigma(1)}, x_{\sigma(2)}, \hdots)$.  We let $\Lambda$ denote the ring of symmetric functions 
and $\Lambda_n$ denote the space of homogeneous symmetric 
functions of degree $n$ so that $\Lambda = \oplus_{n \geq 0} \Lambda_n$.   

A {\it partition} of $n$ is a weakly decreasing sequence $\lambda=(\lambda_1, \lambda_2, \hdots \lambda_k)$ of positive integers which sum to $n$.  We write $| \lambda | = n$ and let $l(\lambda)=k$ be the {\it length} of $\lambda$. Given a partition $\lambda = (\lambda_1, \ldots, \lambda_k)$ of $n$,  we say that a reverse column-strict tableau $T$ is a 
{\em standard reverse column-strict tableau} if each of 
the numbers $1,2, \ldots, n$ appear exactly once in $T$. 
Standard reverse row-strict tableaux are defined similarly.
Note that the set of standard reverse row-strict tableaux is the same as the set of standard reverse column-strict tableaux, so we simply use the term {\it reverse standard Young tableau} when the tableau is standard.
A reverse column-strict tableau can be converted to a standard reverse column-strict tableau by a procedure known as {\it standardization}.  Let $T$ be an arbitrary reverse column-strict tableau such that $x^T = x_1^{a_1} x_2^{a_2} \hdots x_k^{a_k}$.  First replace (from right to left) the $a_1$ $1$s in $T$ with the numbers $1,2, \hdots, a_1$.  Then replace the $a_2$ $2$s with the numbers $a_1+1, \hdots, a_1+a_2$, and so on.  The resulting diagram is a standard reverse column-strict tableau, called the {\it standardization $std(T)$ of $T$}.
The standardization of a reverse row-strict tableau is defined analogously to that of a reverse column-strict tableau but with the entries replaced from bottom to top rather than from right to left.   

A {\it composition} $\alpha \vDash n$ of $n$ is a sequence of positive integers which sum to $n$.  Each composition $\alpha = (\alpha_1, \alpha_2, \hdots , \alpha_k)$ is associated to the subset of $[n-1]$ given by $S(\alpha) = \{ \alpha_1, \alpha_2 - \alpha_1, \hdots, n-\alpha_k \}$.  Note that this is an invertible procedure. That is, if $P=\{s_1, s_2, \hdots , s_k \}$ is an arbitrary subset of $[n-1]$, then the composition $\setcomp(P) = (s_1, s_1+s_2, \hdots, s_1+s_2 + \hdots s_k)$ is precisely the composition such that $S(\setcomp(P))=P$.  We will make use of the {\it refinement order $\preceq$} on compositions which states that $\alpha \preceq \beta$ if and only if $\beta$ is obtained from $\alpha$ by summing some of the parts of $\alpha$.  For example, $(3,1,1,2,1) \preceq (3,2,3)$ but $(3,1,2,1,1)$ and $(3,2,3)$ are incomparable under the refinement ordering. 
If $\alpha$ is a composition, let $\shape(\alpha)$ denote the partition obtained by arranging the parts of $\alpha$ in weakly decreasing order.  We say that $\alpha$ is a {\it rearrangement} of the partition $\shape(\alpha)$.
For example, $\shape(1,3,2,1) = (3,2,1,1)$. If $\lambda = 
(\lambda_1 \geq \cdots \geq \lambda_n)$ and $\mu = (\mu_1 \geq \cdots \geq 
\mu_n)$ are partitions of $n$, then we write $\lambda > \mu$ if 
$\lambda$ dominates $\mu$, i.e. if for all $1 \leq k \leq n$, we have \
$\sum_{i=1}^k \lambda_i \geq \sum_{i=1}^k \mu_i$. 

We also need the notion of {\it complementary compositions}.  The {\it complement $\tilde{\beta}$} to a composition $\setcomp(S)$ arising from a subset $S \subseteq [n-1]$ is the composition obtained from the subset $S^c \subseteq [n-1]$.  For example, the composition $\beta = (1,4,2)$ arising from the subset $S=\{1,5 \} \subseteq [7]$ has complement $\tilde{\beta}=(2,1,1,2,1)$ arising from the subset $S^c=\{2,3,4,6,7\}$.

A {\it quasisymmetric function} is a bounded degree formal power series $f(x) \in \mathbb{Q}[[x_1, x_2, \hdots ]]$ such that for all compositions $\alpha=(\alpha_1, \alpha_2, \hdots, \alpha_k)$, the coefficient of $\prod x_i^{\alpha_i}$ is equal to the coefficient of $\prod x_{i_j}^{\alpha_i}$ for all $i_1 < i_2 < \hdots < i_k$.  We let $Qsym$ denote the ring of quasisymmetric functions 
and $Qsym_n$ denote the space of homogeneous quasisymmetric functions 
of degree $n$ so that $\displaystyle{Qsym = \oplus_{n \geq 0} Qsym_n}$.

A natural basis for $Qsym_n$  is the {\it monomial quasisymmetric basis}, given by the set of all $M_\alpha$ such that  $\alpha \vDash n$ where  
$$M_{\alpha} = \sum_{i_1 < i_2 < \cdots < i_k} x_{i_1}^{\alpha_1} x_{i_2}^{\alpha_2} \cdots x_{i_k}^{\alpha_k}.$$  
Gessel's {\it fundamental basis for quasisymmetric functions}~\cite{Ges84} can be expressed by $$F_{\alpha} = \sum_{\beta \preceq \alpha} M_{\beta},$$ where $\beta \preceq \alpha$ means that $\beta$ is a refinement of $\alpha$.

The {\it descent set} $D(T)$ of a standard tableau is the set of all positive integers $i$ such that $i+1$ appears in a column weakly to the right of the column containing $i$.  The following theorem describes the way a Schur function can be expressed as a positive sum of fundamental quasisymmetric functions.

\begin{theorem}{\cite{Ges84}}{\label{schur2fund}}
 $$s_{\lambda} = \sum_{\beta} d_{\alpha \beta} F_{\beta},$$ where $d_{\alpha \beta}$ is equal to the number of standard reverse column-strict tableaux $T$ of shape $\alpha$ and descent set $D(T)$ such that $\beta(D(T))=\beta$.
\end{theorem}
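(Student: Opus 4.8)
The plan is to start from the combinatorial definition $s_\lambda = \sum_{T \in \mathcal{RCS}_\lambda} x^T$ in (\ref{defschur1}) and to reorganize the sum according to standardization. Every $T \in \mathcal{RCS}_\lambda$ has a well-defined standardization $std(T)$, which is a reverse standard Young tableau of shape $\lambda$, so I would first write
\[
s_\lambda = \sum_{S}\ \sum_{\substack{T \in \mathcal{RCS}_\lambda \\ std(T) = S}} x^T,
\]
where the outer sum runs over all reverse standard Young tableaux $S$ of shape $\lambda$ (standardization preserves the shape). The theorem then reduces to the \emph{fiber identity}
\[
\sum_{\substack{T \in \mathcal{RCS}_\lambda \\ std(T) = S}} x^T = F_{\beta(D(S))}.
\]
Once this is established, summing over $S$ and collecting all $S$ sharing a common descent composition $\beta(D(S)) = \beta$ identifies $d_{\lambda\beta}$ as the number of reverse standard Young tableaux of shape $\lambda$ with that descent composition.

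To prove the fiber identity I would use the monomial expansion of the fundamental quasisymmetric function. Writing $S(\beta) \subseteq [n-1]$ for the subset attached to $\beta$ and substituting the definition of $M_\delta$ into $F_\beta = \sum_{\delta \preceq \beta} M_\delta$ yields
\[
F_{\beta} = \sum_{\substack{i_1 \le i_2 \le \cdots \le i_n \\ i_j < i_{j+1}\ \text{if}\ j \in S(\beta)}} x_{i_1} x_{i_2} \cdots x_{i_n}.
\]
Hence it suffices to set up a weight-preserving bijection between the tableaux $T$ with $std(T) = S$ and the weakly increasing words $i_1 \le \cdots \le i_n$ that strictly increase exactly at the descents of $S$. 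Given such a $T$, I read off $v_k$, the entry of $T$ in the cell carrying the label $k$ in $S$, and assign the word $(v_1, \dots, v_n)$; conversely a word prescribes the entries of $T$ cell by cell through $S$. The map is weight preserving because $x^T = x_{v_1} \cdots x_{v_n}$, so everything comes down to characterizing exactly which words arise.

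The heart of the argument, and the step I expect to be the main obstacle, is verifying that this reading word is precisely a weakly increasing sequence with strict increases exactly at $D(S)$. Weak monotonicity follows directly from the definition of standardization: the smallest entries of $T$ receive the smallest labels, so entries read in increasing label order never decrease. The strictness claim is where the definition of the descent set must be matched to the right-to-left tie-breaking built into standardization. If $v_k = v_{k+1}$, then the cells labeled $k$ and $k+1$ lie in a single horizontal strip of equal entries (no two of which share a column, by column-strictness), and since ties are resolved from right to left, labels within the strip increase as the column index decreases; thus the cell labeled $k+1$ sits strictly to the left of the cell labeled $k$, which means $k \notin D(S)$. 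Conversely, if $k \notin D(S)$, so that $k+1$ lies strictly to the left of $k$, then assigning $v_k = v_{k+1}$ is consistent with both row-weak-decrease and column-strict-decrease, so equality can genuinely occur. Therefore equality of consecutive values is possible exactly at the non-descents of $S$, which is precisely the constraint defining the monomials of $F_{\beta(D(S))}$. Pinning this equivalence down carefully, using only the definitions of standardization and of $D(S)$, completes the fiber identity; summing over $S$ and grouping by descent composition then gives the stated formula with $d_{\lambda\beta}$ counting reverse standard Young tableaux of shape $\lambda$ whose descent set corresponds to $\beta$.
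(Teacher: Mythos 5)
Your proposal is correct and follows essentially the same route as the paper: both group the reverse column-strict tableaux in $s_\lambda = \sum_{T \in \mathcal{RCS}_\lambda} x^T$ into standardization fibers and prove the key claim that each fiber's generating function is $F_{\beta(D(S))}$, using the right-to-left tie-breaking of standardization to match equalities of consecutive values with non-descents (the paper's ``row strips''). The only difference is cosmetic: the paper verifies the fiber identity via the expansion $F_\beta = \sum_{\delta \preceq \beta} M_\delta$ and coarsenings/refinements of the descent composition, while you expand $F_\beta$ directly into weakly increasing words with strict increases at $S(\beta)$ --- two equivalent descriptions of the same bijection.
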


\begin{proof}
We claim that each fundamental quasisymmetric function $F_{\alpha}$ can be thought of as the generating function for all reverse column-strict tableaux which standardize to a fixed standard reverse column-strict tableau.
To see this, consider an arbitrary standard reverse column-strict tableau $T$ with descent set $\{b_1, b_2, \hdots , b_k \}$.  Then the cells of $T$ containing the entries $\{b_{i-1}+1, b_{i-1}+2, \hdots , b_{i}\}$ appear in horizontal strips, called {\it row strips}, so that no two such cells appear in the same column in $T$.  Each reverse column-strict tableau which standardizes to $T$ can similarly be decomposed into the same collection of row strips, so that each destandardization of $T$ corresponds to a coarsening of the weight of $T$.  The coarsest such tableau is the reverse column-strict tableau obtained from $T$ by replacing the entries $\{ b_{i-1}+1, b_{i-1}+2, \hdots , b_i\}$ with the entry $i$.  Therefore exactly the monomial quasisymmetric functions indexed by refinements of the composition $\setcomp(\{b_1, b_2, \hdots , b_k\})$ occur as the weights of the reverse column-strict tableaux which standardize to $T$ and the claim is true.

The claim implies that the fundamental quasisymmetric functions appearing in the Schur function $s_{\lambda}$ are precisely those corresponding to the descent sets of the standard reverse column-strict tableaux of shape $\lambda$, as desired.
\end{proof}

An extension of the classical $\omega$ transformation on 
symmetric functions (defined in the introduction to the space of quasisymmetric functions) appears in the work of Ehrenborg~\cite{Ehr96}, Gessel~\cite{Ges84}, and Malvenuto-Reutenauer~\cite{MalReu95}.  One can define this endomorphism on the fundamental quasisymmetric functions by $\omega(F_{\alpha}) = F_{rev(\tilde{\alpha})}$, where $rev(\tilde{\alpha})$ is the composition obtained by reversing the order of the entries in $\tilde{\alpha}$.  Then $\omega$ is an automorphism of the algebra of quasisymmetric functions whose restriction to the space of symmetric function equals the classical $\omega$ transformation. 

\section{Quasisymmetric Schur functions and row-strict quasisymmetric 
Schur functions}{\label{dualbasis}}

Let $\alpha=(\alpha_1, \alpha_2, \hdots , \alpha_l)$ be a composition of $n$.  The diagram associated to $\alpha$ consists of $l$ rows of left-justified boxes, or {\it cells}, such that the $i^{th}$ row from the top contains $\alpha_i$ cells, as in the English notation.  Given a composition diagram $\alpha=(\alpha_1, \alpha_2, \hdots , \alpha_l)$ with largest part $m$, a {\it column-strict composition tableau} (CSCT), $F$ is a filling of the cells of $\alpha$ with positive integers such that\begin{enumerate}
\item the entries of $F$ weakly decrease in each row 
when read from left to right,
\item the entries in the leftmost column of $F$ strictly increase when read from top to bottom, 
\item and $F$ satisfies the column-strict triple rule. 
\end{enumerate}
Here we say that $F$ satisfies the {\em column-strict triple rule} if when we 
supplement $F$ by adding enough cells with zero-valued entries to the end of each row so that the resulting supplemented tableau, $\hat{F}$, is of rectangular shape $l \times m$, then for $1 \le i < j \le l, \; \; 2 \le k \le m$
$$(\hat{F}(j,k) \not= 0 \; {\rm and} \; \hat{F}(j,k) \ge \hat{F}(i,k)) \Rightarrow \hat{F}(j,k) > \hat{F}(i,k-1)$$
where $\hat{F}(i,j)$ denotes the entry of $\hat{F}$ that lies 
in the cell in the $i$-th row and $j$-th column. 

Define the type of a column-strict composition tableau $F$ to be the weak composition $w(F)=(w_1(F),w_2(F), \hdots)$ where $w_i(F)=$ the number of times $i$ appears in $F$.  The weight of $F$ is $$x^F=\prod_{i} x_i^{w_i(F)}.$$ A CSCT $F$ with $n$ cells is {\it standard} if $x^F = \prod_{i=1}^n x_i$.
If $T$ is a standard {\it CSCT}, then we define its descent set, 
$D(T)$ to be  the set of all $i$ such that $i+1$ appears in a column weakly to the right of the column containing $i$. 

Haglund, Luoto, Mason, and van Willigenburg \cite{HLMvW09} 
defined the quasisymmetric Schur function $\qschur_{\alpha}$ by 
\begin{equation}\label{aqschur}
 \qschur_{\alpha} = \sum_{F} x^F
\end{equation}
where the sum runs over all column-strict composition tableaux of 
shape $\alpha$.  They show that $\qschur_{\alpha}$ as $\alpha$ ranges 
over all compositions of $n$ is a basis for the space 
$Qsym_n$. They also 
show that for any partition $\lambda$ of $n$, 
\begin{equation}\label{colstdecomp}
s_\lambda = \sum_{\alpha: \shape(\alpha) = \lambda} \qschur_{\alpha}
\end{equation}
They provide a combinatorial description of the expansion  
of $\qschur_{\alpha}$ in terms of Gessel's fundamental 
quasisymmetric function basis in the following proposition. 

\begin{proposition}~\cite[Proposition 6.2]{HLMvW09}{\label{qschur2fund}}
Let $\alpha, \beta$ be compositions.  Then $$\qschur_{\alpha} = \sum_{\beta} d_{\alpha \beta} F_{\beta}$$  where $d_{\alpha \beta} = $ the number of standard column-strict composition tableaux $T$ of shape $\alpha$ and $\bm\beta(D(T)) = \beta$.
\end{proposition}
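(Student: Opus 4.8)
The plan is to mirror the structure of the proof of Theorem~\ref{schur2fund}, replacing reverse column-strict tableaux with column-strict composition tableaux (CSCT) and using the standardization/destandardization correspondence for CSCT. The central claim I would establish is that each fundamental quasisymmetric function $F_{\beta}$ that appears in $\qschur_{\alpha}$ serves as the generating function for precisely those CSCT of shape $\alpha$ that standardize to a fixed standard CSCT $T$, and that $\beta$ is recovered as $\setcomp(D(T))$. Once this is shown, summing over all standard CSCT $T$ of shape $\alpha$ and grouping terms of \eqref{aqschur} according to their standardization immediately yields $\qschur_{\alpha} = \sum_{\beta} d_{\alpha\beta} F_{\beta}$ with the stated combinatorial interpretation of $d_{\alpha\beta}$.

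First I would fix an arbitrary standard CSCT $T$ of shape $\alpha$ with descent set $D(T) = \{b_1 < b_2 < \cdots < b_k\}$, and analyze the cells containing each block of consecutive entries $\{b_{i-1}+1, b_{i-1}+2, \ldots, b_i\}$. By the definition of the descent set for CSCT, no two entries within such a block lie in the same column (if $j$ and $j+1$ were in the same column, then $j+1$ would be in a column weakly right of $j$, making $j$ a descent), so these blocks form horizontal ``row strips'' exactly as in the symmetric case. I would then argue that replacing each such block by a single repeated value is precisely what destandardization of a CSCT does, and conversely that every CSCT destandardizing from $T$ is obtained by coarsening $T$ along these blocks. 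This requires checking that the CSCT conditions (rows weakly decreasing, leftmost column strictly increasing, and the column-strict triple rule) are preserved under this coarsening operation, i.e.\ that destandardizing $T$ by merging a descent-free block into equal entries yields a valid CSCT, and that standardizing any such CSCT returns $T$.

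The weights arising from the CSCT that standardize to $T$ are then exactly the monomials $M_{\gamma}$ for all $\gamma$ that coarsen the composition recording the block sizes; since the block sizes are determined by the gaps $b_1, b_2 - b_1, \ldots, n - b_k$, these coarsenings are precisely the compositions $\gamma \preceq \setcomp(D(T))$. Summing $\sum_{\gamma \preceq \setcomp(D(T))} M_{\gamma} = F_{\setcomp(D(T))}$ by the definition of the fundamental basis gives that the total weight contribution of all CSCT standardizing to $T$ is exactly $F_{\setcomp(D(T))}$. Grouping the sum in \eqref{aqschur} by the standardization map then produces one copy of $F_{\setcomp(D(T))}$ for each standard CSCT $T$ of shape $\alpha$, so the coefficient of $F_{\beta}$ is the number of standard CSCT $T$ of shape $\alpha$ with $\setcomp(D(T)) = \beta$, as claimed.

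The main obstacle I anticipate is verifying that the standardization and destandardization operations on CSCT are genuinely inverse bijections that preserve the triple rule; unlike the classical tableau setting, where column-strictness and row-weak-decrease behave transparently under coarsening, the column-strict triple rule is a nonlocal condition relating three cells across adjacent columns, and I would need to confirm carefully that merging a descent-free block of consecutive standard entries into a common value neither creates a violation of the triple rule nor destroys the strict increase in the leftmost column. Establishing that this merging is well-defined and reversible — equivalently, that the descent set correctly encodes exactly which entries may be merged — is the technical heart of the argument, and I would lean on the CSCT analogue of the row-strip decomposition to handle it.
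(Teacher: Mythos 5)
Your plan is essentially correct, but note first that this paper contains no proof of Proposition \ref{qschur2fund} at all: it is quoted from \cite{HLMvW09}, and the standardization argument you outline is in substance the proof given in that source, consciously modeled (as you say) on the paper's proof of Theorem \ref{schur2fund}. The more interesting comparison is with how this paper proves its own mirror statement, Proposition \ref{rowq2fund}, where it takes a genuinely different route from yours: instead of verifying directly that destandardization of composition tableaux preserves the triple rule, it transports everything through the bijection $\rho$ to reverse tableaux, invokes the commutation of $\rho$ with standardization (Proposition \ref{prop:standard}), and reduces to the already-established Theorem \ref{schur2fund}. The trade-off is clear: your direct route stays entirely inside the world of composition tableaux but forces you through the verification you rightly flag as the technical heart, while the paper's route buys that verification for free from the classical case at the price of needing $\rho$, its inverse, and the commutation lemma. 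For what it is worth, your deferred verification does go through. First, equal entries of a CSCT necessarily occupy distinct columns: if $\hat{F}(i,k)=\hat{F}(j,k)\neq 0$ with $i<j$ and $k\ge 2$, the column-strict triple rule forces $\hat{F}(j,k)>\hat{F}(i,k-1)\ge \hat{F}(i,k)$, a contradiction with weakly decreasing rows; this gives your row strips and preserves strictness in the leftmost column under merging. Second, the strict inequality in the triple rule survives merging because within a descent-free block the standard labels increase as the columns strictly decrease (if $m$ and $m+1$ lie in one block then $m\notin D(T)$, so $m+1$ sits strictly left of $m$); hence two labels of one block can never occupy cells in columns $k$ and $k-1$ with the larger label in column $k$, which is exactly the configuration that would collapse $\hat{F}(j,k)>\hat{F}(i,k-1)$ to an equality. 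Finally, the converse inclusion (every CSCT standardizing to $T$ has weight refining $\setcomp(D(T))$, with exactly one filling per refinement) does depend on fixing the CSCT standardization order so that among equal entries labels are assigned from the rightmost column leftward, guaranteeing that merged labels $j,j+1$ satisfy $j\notin D(T)$; this is the one ingredient your proposal names only implicitly, and it is supplied by the definition of standardization in \cite{HLMvW09}, so grouping the sum in \eqref{aqschur} by standardization then yields $\qschur_{\alpha}=\sum_{T} F_{\setcomp(D(T))}$ as you claim.
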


Next we define our row-strict version of the quasisymmetric Schur 
functions. Let $\alpha=(\alpha_1, \alpha_2, \hdots , \alpha_l)$ be a composition of $n$.  Given a composition diagram $\alpha$  with largest part $m$, we define a {\it row-strict composition tableau} (RSCT), $F$, to be a filling of the cells of $\alpha$ with positive integers such that
\begin{enumerate}
\item the entries of $F$ strictly decrease in each row when read from left to right,
\item the entries in the leftmost column of $F$ weakly increase when read from top to bottom, 
\item and $F$ satisfies the row-strict triple rule. 
\end{enumerate}
Here we say that $F$ satisfies the {\em row-strict triple rule} if 
when we supplement $F$ by adding enough cells with zero-valued entries to the end of each row so that the resulting supplemented tableau, $\hat{F}$, is of rectangular shape $l \times m$, then for $1 \le i < j \le l, \; \; 2 \le k \le m$
$$(\hat{F}(j,k) > \hat{F}(i,k)) \Rightarrow \hat{F}(j,k) \ge \hat{F}(i,k-1).$$

\begin{figure}
$$F=\tableau{2 & 1 \\ 2 \\ 3 & 2 \\ 3} \; , \qquad \hat{F}=\tableau{2 & 1 \\ 2 & 0 \\ 3 & 2 \\ 3 & 0} \; , \qquad x^F=x_1 x_2^3 x_3^2$$
\label{fig:row}
\caption{An RSCT $F$ of shape $(2,1,2,1)$ and weight $x_1 x_2^3 x_3^2$}
\end{figure}

This mirrors the definition of a composition tableau given in \cite{HLMvW09} and presented above, interchanging the roles of weak and strict.  Continuing this analogy, define the type of a row-strict composition tableau $F$ to be the weak composition $w(F)=(w_1(F),w_2(F), \hdots)$ where $w_i(F)=$ the number of times $i$ appears in $F$.  The weight associated with $F$ is $$x^F=\prod_{i} x_i^{w_i(F)}.$$  Also, a RSCT $F$ with $n$ cells is {\it standard} if $x^F = \prod_{i=1}^n x_i$.  See Figure {\ref{fig:row}} for an example of a RSCT and its weight.

\begin{definition}
Let $\alpha$ be a composition.  Then the $\func \rowq_{\alpha}$ is given by $$\rowq_{\alpha} = \sum_T x^T$$ where the sum is over all RSCT's $T$ of shape $\alpha$.  See Figure {\ref{polynomial}} for an example.
\end{definition}

\begin{figure}
$$\tableau{2&1 \\ 2 \\ 3&2 \\ 3} \qquad \tableau{2&1 \\ 2 \\ 3&2 \\ 4} \qquad \tableau{2&1 \\ 2 \\ 4&3 \\ 4} \qquad \tableau{2&1 \\ 2 \\ 4&2 \\ 4} \qquad \tableau{2&1 \\ 3 \\ 4&3 \\ 4} \qquad \tableau{3&1 \\ 3 \\ 4&3 \\ 4} \qquad \tableau{3&2 \\ 3 \\ 4&3 \\ 4}$$
\caption{The $\func \rowq_{(2,1,2,1)}(x_1,x_2,x_3,x_4)=$ $x_1x_2^3x_3^2 + x_1x_2^3x_3x_4 + x_1x_2^2x_3x_4^2 + x_1x_2^3x_4^2 + x_1x_2x_3^2x_4^2 + x_1x_3^3x_4^2 + x_2x_3^3x_4^2$}
\label{polynomial}
\end{figure}

\begin{proposition}
If $\alpha$ is an arbitrary composition, then $\rowq_{\alpha}$ is a quasisymmetric polynomial.
\end{proposition}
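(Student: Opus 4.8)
The plan is to show directly that the coefficient of each monomial in $\rowq_{\alpha}$ depends only on the composition formed by its nonzero exponents, which is precisely the defining property of a quasisymmetric function. I would first record that $\rowq_{\alpha}$ is homogeneous of degree $n=|\alpha|$, since every RSCT of shape $\alpha$ has exactly $n$ cells, so in particular it has bounded degree and only the distribution of coefficients needs attention. It then suffices to prove: whenever $\mu=(\mu_1,\mu_2,\ldots)$ and $\nu=(\nu_1,\nu_2,\ldots)$ are weak compositions whose nonzero parts, read in order, form the same sequence $(c_1,\ldots,c_k)$, the number of RSCT's of shape $\alpha$ with weight $x^{\mu}$ equals the number with weight $x^{\nu}$.

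To establish this I would build an explicit relabeling bijection. Let the nonzero parts of $\mu$ sit in positions $p_1<\cdots<p_k$ and those of $\nu$ in positions $q_1<\cdots<q_k$, so that $\mu_{p_j}=\nu_{q_j}=c_j$. An RSCT $T$ of weight $x^{\mu}$ uses exactly the entries $p_1,\ldots,p_k$, with $p_j$ occurring $c_j$ times. I would define $f$ on the occurring values by $f(p_j)=q_j$ and set $f(0)=0$, and let $\tau(T)$ be the filling obtained by replacing each entry $v$ by $f(v)$. Since $0<p_1<\cdots<p_k$ and $0<q_1<\cdots<q_k$, the map $f$ is a strictly order-preserving bijection fixing $0$; hence $\tau(T)$ has the same shape as $T$, weight $x^{\nu}$, and an evident inverse given by $f^{-1}$.

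The heart of the argument is to verify that $\tau$ sends RSCT's to RSCT's. The key point I would exploit is that each of the three defining conditions is expressed solely through the order relations $<$ and $\le$ between entries together with the test ``entry $=0$'': condition (1) is strict decrease along rows, condition (2) is weak increase down the first column, and the row-strict triple rule is an implication among comparisons of $\hat F(i,k-1)$, $\hat F(i,k)$, $\hat F(j,k)$ and the condition $\hat F(j,k)\neq 0$. Because $f$ is strictly order preserving and fixes $0$, it preserves every such relation, and since relabeling leaves row lengths unchanged the appended zero cells occupy the same positions, so the augmented tableau of $\tau(T)$ is exactly $\hat F$ with $f$ applied entrywise. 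Each condition therefore holds for $\tau(T)$ if and only if it holds for $T$, and $\tau$ restricts to a bijection between the two sets of tableaux, giving the desired equality of coefficients.

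The step I expect to require the most care is the bookkeeping around the augmentation in the triple rule: I must confirm that $f(0)=0$ forces a cell of the rectangle to be zero exactly when the corresponding cell after relabeling is zero, so that the quantified implication transfers verbatim. Once this is checked, the invariance of all three conditions under the order-preserving relabeling is immediate, and no case analysis on the triple rule itself is needed.
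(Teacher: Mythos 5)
Your proposal is correct and takes essentially the same route as the paper, which also proves quasisymmetry via a renumbering bijection on RSCT's and notes that the relabeling preserves the relative order of entries and hence all three defining conditions. Your version merely spells out more explicitly (and slightly more generally, between two arbitrary weak compositions) the order-preservation argument that the paper states in one sentence.
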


\begin{proof}
Consider an arbitrary monomial $x_1^{a_1} x_2^{a_2} \hdots x_k^{a_k}$ appearing in $\rowq_{\alpha}$ with coefficient $c$.  We must prove that $x_{i_1}^{a_1} x_{i_2}^{a_2} \hdots x_{i_k}^{a_k}$, where $i_1 < i_2 < \hdots < i_k$, also appears in $\rowq_{\alpha}$ with coefficient $c$.  Each of the $c$ appearances of the monomial $x_1^{a_1} x_2^{a_2} \hdots x_k^{a_k}$ in $\rowq_{\alpha}$ corresponds to an RSCT $T$.  Renumber the entries of $T$ so that each appearance of the entry $j$ becomes $i_j$ for each $j$ from $1$ to $k$.  The resulting figure satisfies the RSCT conditions since the relative order of the cell entries is not altered by the renumbering procedure.  Since the renumbering procedure is a bijection between RSCTs of weight $x_1^{a_1} x_2^{a_2} \hdots x_k^{a_k}$ and RSCTs of weight $x_{i_1}^{a_1} x_{i_2}^{a_2} \hdots x_{i_k}^{a_k}$, the coefficient of $x_1^{a_1} x_2^{a_2} \hdots x_k^{a_k}$ is equal to the coefficient of $x_{i_1}^{a_1} x_{i_2}^{a_2} \hdots x_{i_k}^{a_k}$ for any choice of $i_1 < i_2 < \hdots < i_k$ and therefore the polynomial is a quasisymmetric polynomial.
\end{proof}

We shall see in Section \ref{transitions} that the row-strict quasisymmetric Schur functions form a basis for quasisymmetric functions.  The row-strict quasisymmetric Schur functions in $n$ variables form a basis for $Qsym_n$ which is different 
from the basis of $Qysm_n$ formed by the quasisymmetric Schur functions even though they can be described through a similar process.  
For example, 
the transition matrix between the two bases is given in Figure {\ref{qs2dqs}} for $n=4$ where each row gives the expansion of $\rowq_{\alpha}$ in terms 
of the quasisymmetric Schur functions. This illustrates that their relationship is fairly complex.

\begin{figure}
\begin{center}
\begin{tabular}{l || c | c | c | c | c | c | c | c |}
 $\rowq_{\alpha} \backslash \qschur_{\alpha}$  & 4 & 31 & 13 & 22 & 211 & 121 & 112 & 1111 \\
\hline
\hline
4 & 0 & 0 & 0 & 0 & 0 & 0 & 0 & 1 \\
\hline
31 & 0 & 0 & 0 & 0 & 0 & 0 & 1 & 0 \\
\hline
13 & 0 & 0 & 0 & 0 & 1 & 1 & 0 & 0 \\
\hline
22 & 0 & 0 & 0 & 1 & 0 & 0 & 0 & 0 \\
\hline
211 & 0 & 0 & 1 & -1 & 0 & 1 & 0 & 0 \\
\hline
121 & 0 & 0 & 0 & 1 & 0 & -1 & 0 & 0 \\
\hline
112 & 0 & 1 & 0 & 0 & 0 & 0 & 0 & 0 \\
\hline
1111 & 1 & 0 & 0 & 0 & 0 & 0 & 0 & 0 \\
\hline
\end{tabular}
\end{center}
\caption{The transition matrix from row-strict quasisymmetric Schur functions to quasisymmetric Schur functions}{\label{qs2dqs}}
\end{figure}

\subsection{Decomposing a Schur function into row-strict quasisymmetric Schur functions}

Recall that a {\it reverse row-strict tableau} $T$ of partition shape $\lambda$ is a filling of the cells of the diagram of $\lambda$ with positive integers such that the entries in each column weakly decrease from top to bottom and the entries in each row strictly decrease from left to right.  Row-strict composition tableaux are related to reverse row-strict tableaux by a simple bijection, which is analogous to the bijection between column-strict composition tableaux and reverse column-strict tableaux \cite{Mas08}.  

The following map sends a reverse row-strict tableau $T$ to a RSCT $\rho(T)=F$.  We describe it algorithmically.
\begin{enumerate}
\item Begin with the entries in the leftmost column of $T$ and place them into the first column of $F$ in weakly increasing order from top to bottom.
\item After the first $k-1$ columns of $T$ have been placed into $F$, place the entries from the $k^{th}$ column of $T$ into $F$, beginning with the largest.  Place each entry $e$ into the cell $(i,k)$ in the highest row $i$ such that $(i,k)$ does not already contain an entry from $T$ and the entry $(i,k-1)$ is strictly greater than $e$.
\end{enumerate}

\begin{figure}
$$T=\tableau{7&6&5&4&2 \\ 7&5&3&1 \\ 6&4&2&1 \\ 2} \; \; \; \qquad \rho(T)=\tableau{2 \\ 6&5&3&1 \\ 7&6&5&4&2 \\ 7&4&2&1}$$
\caption{The bijection $\rho$ applied to a reverse row-strict tableau $T$}
{\label{ex-map}}
\end{figure}

See Figure {\ref{ex-map}} for an example of the map $\rho$ from a reverse row-strict tableau $T$ to a RSCT $\rho(T)=F$.

\begin{lemma}{\label{RCTtoTAB}}
The map $\rho$ is a weight preserving bijection between the set of reverse row-strict tableaux of shape $\lambda$ and the set of RSCT's of shape $\alpha$ where $\shape(\alpha)=\lambda$.
\end{lemma}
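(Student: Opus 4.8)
The plan is to show that $\rho$ is a well-defined map from reverse row-strict tableaux of shape $\lambda$ into row-strict composition tableaux of some shape $\alpha$ with $\shape(\alpha)=\lambda$, that it preserves the weight, and that it is a bijection. Since $\rho$ places into $F$ exactly the cells (with their entries) that appear in $T$, weight preservation is immediate: the multiset of entries is unchanged. So the substance is to verify (i) $\rho(T)$ is a genuine RSCT, (ii) $\shape(\rho(T))$ rearranges $\lambda$, and (iii) $\rho$ is invertible.

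First I would check that $\rho(T)=F$ satisfies the three defining conditions of an RSCT. Condition~(2), weak increase down the leftmost column, holds by construction since step~(1) inserts the first column of $T$ in weakly increasing order. For condition~(1), strict decrease along rows, I would argue that when entry $e$ is placed in cell $(i,k)$, the requirement that $F(i,k-1)$ be \emph{strictly greater} than $e$ is exactly the condition enforced in step~(2); one must check that such a cell always exists, i.e.\ that the placement never fails. Here the key point is a counting/monotonicity argument: the entries of column $k$ of $T$ are strictly dominated (entrywise, after the weak-decrease-down-columns property of $T$) by entries of column $k-1$ of $T$, and processing column $k$ from largest to smallest guarantees that each successive entry finds an available row whose left neighbor strictly exceeds it. This also yields~(ii), that every cell placed in column $k$ has a cell to its left in column $k-1$, so $F$ is a left-justified composition diagram whose row lengths rearrange the column lengths of $T$, hence $\shape(\alpha)=\lambda$. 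The row-strict triple rule, condition~(3), I would verify by examining a triple $\hat F(i,k-1),\hat F(i,k),\hat F(j,k)$ with $i<j$ and $\hat F(j,k)>\hat F(i,k)$, and tracing back through the greedy placement rule to show $\hat F(j,k)\ge \hat F(i,k-1)$; this follows because the cell $(j,k)$ was filled while $(i,k)$ was already occupied, forcing the left-neighbor comparison to have been decided in a way compatible with the triple inequality.

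To prove $\rho$ is a bijection, I would construct the inverse explicitly: given an RSCT $F$ of shape $\alpha$ with $\shape(\alpha)=\lambda$, recover $T$ by sorting, within each column of $F$, the entries into weakly decreasing order from top to bottom and left-justifying into the diagram of $\lambda$. I would then show this inverse map produces a valid reverse row-strict tableau and that it genuinely undoes $\rho$. The strict-decrease-along-rows property of $T$ is forced by the RSCT row condition together with the triple rule, while the weak-decrease-down-columns property is exactly the sorting just performed. The cleanest route is to appeal to the analogous column-strict result of \cite{Mas08}: the entire construction is obtained from the column-strict bijection by the systematic interchange of ``weak'' and ``strict'' noted in the text, so the proof there transfers verbatim under this dictionary.

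\textbf{The main obstacle} is the well-definedness of the greedy placement in step~(2)—showing that the target cell in each column always exists and is uniquely determined, and that the resulting filling respects the triple rule. This is where the interplay between the strict row condition and the triple rule is genuinely used, and it is the step that cannot be reduced to bookkeeping. Once well-definedness and the reversibility of the column-sorting inverse are established, bijectivity and the shape statement $\shape(\alpha)=\lambda$ follow directly.
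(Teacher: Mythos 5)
Your proposal follows essentially the same route as the paper: conditions (1) and (2) by construction, the triple rule deduced from the order of insertion, the shape claim from the fact that $\rho$ preserves column sets, and an inverse given by sorting each column of $F$ into weakly decreasing order, with strict row decrease proved by the counting argument that the $i^{th}$ largest entry of column $j$ sits in $F$ immediately right of a strictly greater entry, as do the $i-1$ larger ones, giving at least $i$ strictly greater entries in column $j-1$. One detail in your triple-rule justification is reversed, however: if $\hat{F}(j,k) > \hat{F}(i,k)$ with $i<j$, then the entry in cell $(j,k)$ was inserted \emph{before} the one in $(i,k)$ (entries of a column are placed from largest to smallest), so cell $(i,k)$ was still \emph{empty} --- not ``already occupied,'' as you write --- when $(j,k)$ was filled; the greedy rule's refusal to place that entry into the vacant cell $(i,k)$ is precisely what forces $\hat{F}(i,k-1) \le \hat{F}(j,k)$. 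With that corrected, your argument coincides with the paper's. On the plus side, you explicitly isolate and sketch the well-definedness of the greedy placement (that each entry of column $k$ finds an admissible vacant cell), which the paper leaves implicit; your counting argument for it is sound and is in fact the same counting the paper deploys only for the inverse direction. Two minor notes: the paper's proof of strict row decrease for $\rho^{-1}(F)$ uses only the RSCT row condition, not the triple rule as you suggest; and, like the paper, you do not verify that $\rho \circ \rho^{-1}$ is the identity (your appeal to the column-strict analogue of \cite{Mas08} is the standard way to cover this), so no complaint there relative to the paper's own level of detail.
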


\begin{proof}
Given an arbitrary reverse row-strict tableau $T$, we must first prove that the figure $\rho(T)=F$  is indeed an RSCT.  The row entries satisfy the first RSCT condition by construction and the leftmost column satisfies the second RSCT condition by construction.  Therefore it is enough to show that the 
row-strict triple condition is satisfied.

Consider the supplemented tableau $\hat{F}$ and select two entries $\hat{F}(i,k), \hat{F}(j,k)$ in the same column of $\hat{F}$ such that $\hat{F}(j,k) \not= 0$ and $\hat{F}(j,k) > \hat{F}(i,k)$.  The entry $\hat{F}(j,k)$ is inserted into $F$ before $\hat{F}(i,k)$ since $\hat{F}(j,k) > \hat{F}(i,k)$.  This means that the cell $(i,k)$ of $F$ is empty during the insertion of $\hat{F}(j,k)$.  The entry $\hat{F}(i,k-1)$ must therefore be less than or equal to $\hat{F}(j,k)$, for otherwise $\hat{F}(j,k)$ would be placed into cell $(i,k)$.

The shape of $\rho(F)$ is a rearrangement of $\lambda$ by construction, since the column sets of $\rho(T)$ are the same as the column sets of $T$.  Therefore the map $\rho$ sends a reverse row-strict tableau of shape $\lambda$ to an RSCT of shape $\alpha$ where $\shape(\alpha) = \lambda$.

To see that the map is a bijection, we describe its inverse.  Given an RSCT $F$, we create a reverse row-strict tableau $\rho^{-1}(F)=T$ by rearranging the column entries into weakly decreasing order from top to bottom.  We must prove that the resulting figure is indeed a reverse row-strict tableau.  The columns of $T$ are weakly decreasing by construction, so we must prove that the rows are strictly decreasing.

Consider the $i^{th}$ largest entry in the $j^{th}$ column of $T$.  This entry appears in $F$ immediately to the right of a strictly greater entry, as do all of the $i-1$ entries greater than or equal to this entry.  Therefore there are at least $i$ strictly greater entries in column $j-1$ of $T$ and therefore the $i^{th}$ largest entry in the $j^{th}$ column of $T$ appears immediately to the right of a strictly greater entry.  Since this is true for all entries in all columns of $T$, the rows of $T$ are strictly decreasing when read from left to right and therefore $T$ is a reverse row-strict tableau.
\end{proof}

Lemma \ref{RCTtoTAB} implies that each Schur function $s_{\lambda}$ decomposes into a positive sum of row-strict quasisymmetric Schur functions indexed by compositions that rearrange the transpose of $\lambda$.  That is, $$s_{\lambda} = \sum_{\alpha: \shape(\alpha) = \lambda'} \rowq_{\alpha}.$$

An arbitrary Schur function can therefore be decomposed into either a sum of quasisymmetric Schur functions~\cite{HLMvW09} or a sum of row-strict quasisymmetric Schur functions.

\section{Properties of the row-srict quasisymmetric Schur functions}{\label{properties}}

In order to develop several fundamental properties of the row-strict quasisymmetric Schur functions, we need to understand the behavior of the map $\rho$ which interpolates between row-strict composition tableaux and reverse row-strict 
tableaux.

\subsection{Properties of the map $\rho$}

Recall that a reverse row-strict tableau can be converted to a standard Young tableau by the standardization procedure described in Section {\ref{classical}}.  The standardization of a row-strict composition tableau  $F$ is defined similarly to that of a reverse row-strict tableau, where the entries are replaced beginning with the leftmost column and moving left to right, replacing entries within a column from bottom to top except for the entries in the leftmost column, which are replaced from top to bottom.  The resulting standard composition tableau is called the {\it standardization} of $F$ and denoted $st(F)$.  See Figure {\ref{fig:standard}} for an example.  Note that this procedure does not alter the shape of $F$.

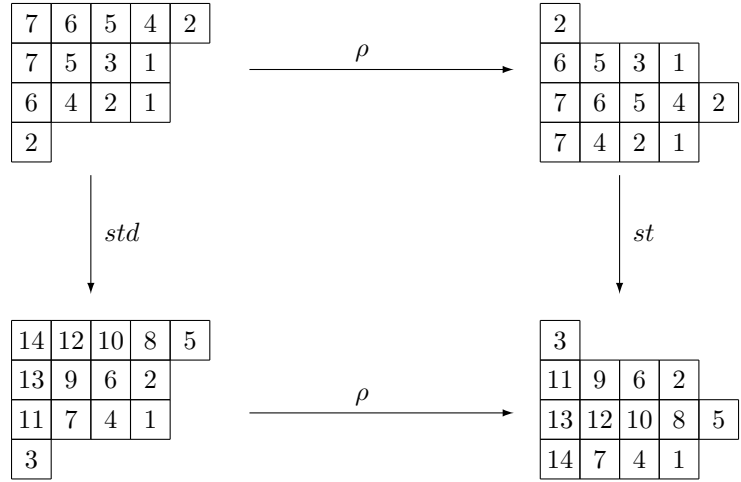
\begin{figure}{\label{fig:standard}}
\begin{center}
\begin{picture}(250,200)
\put(0,180){$\tableau{7&6&5&4&2 \\ 7&5&3&1 \\ 6&4&2&1 \\ 2}$}
\put(200,180){$\tableau{2 \\ 6&5&3&1 \\ 7&6&5&4&2 \\ 7&4&2&1}$}
\put(0,60){$\tableau{14&12&10&8&5 \\ 13&9&6&2 \\ 11&7&4&1 \\ 3}$}
\put(200,60){$\tableau{3 \\ 11&9&6&2 \\ 13&12&10&8&5 \\ 14&7&4&1 }$}
\put(30,130){\vector(0,-1){45}}
\put(230,130){\vector(0,-1){45}}
\put(90,170){\vector(1,0){100}}
\put(90,40){\vector(1,0){100}}
\put(130,175){$\rho$}
\put(130,45){$\rho$}
\put(35,105){$std$}
\put(235,105){$st$}
\end{picture}
\end{center}
\caption{The map $\rho$ commutes with standardization.}
\end{figure}

\begin{proposition}{\label{prop:standard}}
Standardization commutes with the map $\rho$ in the sense that if $T$ is an arbitrary reverse row-strict tableau then $st(\rho(T))=\rho(std(T))$.
\end{proposition}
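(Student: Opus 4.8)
The plan is to prove the identity one column at a time, by showing that $\rho$ carries the cell-ordering used to standardize $T$ to the cell-ordering used to standardize $\rho(T)$. Both $st(\rho(T))$ and $\rho(std(T))$ are standard composition tableaux of the same shape: the shape produced by $\rho$ is determined by the column multisets alone, which $T$ and $std(T)$ share, while standardization never moves a cell. Thus it suffices to verify that each of the labels $1,\dots,n$ occupies the same cell under the two procedures. To organize this I would first record each standardization as a total order on cells. Standardizing $T$ ranks cells by value, breaking ties among equal entries, which lie in distinct rows since rows strictly decrease, by taking the lower cell first. Standardizing $\rho(T)$ ranks cells by value, breaking ties first by column from left to right and then, within a column, from top to bottom in the first column but from bottom to top in every later column.

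The core is an induction on the column index $k$ proving two statements together: that $\rho$ inserts the entries of $T$ and of $std(T)$ into exactly the same cells of column $k$, and that the standard labels these insertions produce in column $k$ agree. The only place where $\rho$'s insertion could behave differently on $T$ and on $std(T)$ is its strict test $F(i,k-1)>e$: standardization preserves every strict inequality between distinct values, so a discrepancy could arise only when the entry $e$ inserted into column $k$ equals the value $v$ of its left neighbor in column $k-1$. The key lemma, which I expect to be the main obstacle, is that in a reverse row-strict tableau of partition shape, of any two equal entries the one in the earlier column lies strictly lower. This follows quickly: if equal values $v$ sit at $(a,k)$ and $(b,k')$ with $k<k'$ and $a\le b$, then the cell $(a,k')$ exists because row lengths weakly decrease, strict decrease along row $a$ forces $(a,k')<v$, and weak decrease down column $k'$ forces $(a,k')\ge (b,k')=v$, a contradiction. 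Because $std(T)$ breaks ties from the bottom up, this lemma guarantees every such left neighbor receives a smaller label than $e$, so the test $F(i,k-1)>e$ fails in $T$ exactly when its standardized counterpart fails; hence $\rho$ inserts into the same cells whether it reads $T$ or $std(T)$.

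The same lemma settles the tie-breaking for the labels. Within a fixed value it shows that a cell lying in an earlier column is always lower, so $T$'s bottom-up (by row) ordering of that value coincides with the column-by-column ordering that $st$ imposes on $\rho(T)$. It remains to match the order within a single column, and here I would simply compare insertion directions: in a non-initial column $\rho$ inserts entries from largest to smallest into the highest available cell, so among equal entries larger labels land higher, which is exactly the bottom-to-top rule $st$ uses there; in the first column $\rho$ instead lists its entries in increasing order from the top, matching the top-to-bottom rule $st$ uses in the leftmost column. Feeding these two comparisons into the column induction shows that every label occupies the same cell in $\rho(std(T))$ as in $st(\rho(T))$, giving the asserted equality.
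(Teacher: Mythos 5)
Your proof is correct, but it takes a more granular route than the paper's. The paper's argument is short: since $\rho$ never changes the set of entries in a column, and since an RSCT is recoverable from its column sets (via the bijection of Lemma \ref{RCTtoTAB}), it suffices to check that $std(T)$ and $st(\rho(T))$ assign the same \emph{sets} of labels to each column; this follows in one stroke from the observation that among equal entries of a reverse row-strict tableau the lower one lies weakly left, so the bottom-to-top replacement in $std$ proceeds column by column from left to right, exactly matching the column scan defining $st$. You prove the same key lemma (in the sharper form: strictly earlier column implies strictly lower row, with the same two-line argument), but instead of factoring through column sets you run $\rho$'s insertion in parallel on $T$ and $std(T)$ and establish cell-by-cell agreement by induction on columns. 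This forces you to verify two things the paper's proof never needs: that the strict test $F(i,k-1)>e$ is stable under standardization (the only danger being equal values, which your lemma resolves since the column-$(k-1)$ copy gets the smaller label), and that the within-column tie-break directions of $st$ (top-to-bottom in the first column, bottom-to-top elsewhere) mirror the order in which $\rho$ fills cells. What your approach buys is independence from the implicit uniqueness fact that an RSCT is determined by its column multisets, and it makes visible \emph{why} the asymmetric first-column convention in the definition of $st$ is the right one; what it costs is length. One slip to fix: your opening claim that $T$ and $std(T)$ ``share column multisets'' is false as stated (their entries are different numbers); what is true, and what your induction actually proves, is that standardization preserves which cells lie in which column and that $\rho$ fills the same cells for both inputs, so the equality of shapes is a conclusion of the induction rather than an available hypothesis. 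Since nothing downstream leans on that sentence, the argument stands.
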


\begin{proof}
The map $\rho$ does not alter the set of column entries.  That is, the set of entries in column $c$ of $T$ are equal to the set of entries in column $c$ of $\rho(T)$.  Therefore it is enough to show that the set of entries in an arbitrary column of $std(T)$ is equal to the set of entries in the corresponding column of $st(\rho(T))$.

Consider two equal entries in a reverse row-strict tableau.  The lower entry must appear weakly left of the higher entry since the row entries are strictly decreasing left to right and the column entries are weakly decreasing top to bottom.  Therefore during the standardization process the entries are replaced beginning with the leftmost column and moving left to right.  Therefore the set of column entries in an arbitrary column of $st(T)$ is equal to the set of entries in the corresponding column of $st(\rho(T))$ and thus standardization commutes.
\end{proof}

\subsection{Transitions to classical quasisymmetric function bases}{\label{transitions}}

Each monomial in a row-strict quasisymmetric Schur function corresponds to a row-strict composition tableau whose weight corresponds to the non-zero exponents in the monomial.  Consider the monomial $x_1^{\beta_1} x_2^{\beta_2} \cdots x_k^{\beta_k}$ with exponent composition $\beta = (\beta_1, \beta_2, \hdots , \beta_k)$.  The coefficient of this monomial in $\rowq_{\alpha}$ is equal to the number of row-strict composition tableaux of shape $\alpha$ and weight $\beta$.  By 
shifting the entries in a row-strict composition tableau appropriately, 
it is easy  to see that $$\rowq_{\alpha} = \sum_{\beta} K(\alpha, \beta)^r M_{\beta},$$ where $K(\alpha, \beta)^r$ is the number of row-strict composition tableaux of shape $\alpha$ and weight $\beta$.

  Given a standard row-strict composition tableau $F$, its {\it transpose descent set $D'(F)$} is the set of all $i$ such that $i+1$ appears in a column strictly to the left of the column containing $i$.  Since the map $\rho$ preserves the column sets of the diagram, the transpose descent set of a reverse row-strict standard Young tableau $T$ (defined analogously) is equal to the transpose descent set of $\rho(T)$.  To each transpose descent set $S=\{s_1,s_2, \hdots s_k\}$ one associates a unique composition $\bm\beta(S)=(s_1, s_2-s_1, \hdots , s_k-s_{k-1},n-s_k)$ whose successive parts are equal to the differences between consecutive elements of the set $S$ and whose last part is given by the difference of $n$ and the last element $s_k$ in the set $S$.

Next we prove the analogue of Proposition \ref{qschur2fund} for 
row-strict quasisymmetric Schur functions. 

\begin{proposition}{\label{rowq2fund}}
Let $\alpha, \beta$ be compositions.  Then $$\rowq_{\alpha} = \sum_{\beta} d_{\alpha \beta} F_{\beta}$$ where $d_{\alpha \beta}$ is equal to the number of standard row-strict composition tableaux $F$ of shape $\alpha$ and $\beta(D'(F)) = \beta$.
\end{proposition}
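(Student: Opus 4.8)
The plan is to mirror the proof of Theorem~\ref{schur2fund}: I will show that for a fixed standard RSCT $F$ the fundamental quasisymmetric function $F_{\beta(D'(F))}$ is exactly the generating function $\sum_{st(T)=F} x^T$ for all row-strict composition tableaux that standardize to $F$. Since every RSCT of shape $\alpha$ standardizes to a unique standard RSCT of shape $\alpha$, I can partition the defining sum $\rowq_{\alpha}=\sum_T x^T$ according to the value of $st(T)$, and then collect the standard tableaux by the value of $\beta(D'(F))$; this produces $\rowq_{\alpha}=\sum_{\beta} d_{\alpha\beta} F_{\beta}$ with $d_{\alpha\beta}$ equal to the number of standard RSCTs $F$ of shape $\alpha$ satisfying $\beta(D'(F))=\beta$, which is the assertion.

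To compute the fiber generating function over a fixed $F$, I would exploit the map $\rho$. By Lemma~\ref{RCTtoTAB} it is a weight-preserving bijection, and by Proposition~\ref{prop:standard} it commutes with standardization and preserves column sets, hence transpose descent sets; so the RSCTs standardizing to $F$ correspond bijectively and weight-preservingly to the reverse row-strict tableaux standardizing to $\rho^{-1}(F)$, which has the same transpose descent set, and where column monotonicity is explicit. Writing $D'(F)=\{b_1<\cdots<b_k\}$ and setting $b_0=0$, $b_{k+1}=n$, the key structural point is that the entries of each run $\{b_{i-1}+1,\ldots,b_i\}$ lying between consecutive transpose descents occupy no two cells in the same row: across such a run the column index weakly increases (each non-descent step sends $j+1$ weakly to the right of $j$), while strict decrease along rows would force a larger value strictly to the \emph{left} within a row, so two run entries cannot share a row. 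Thus each run is a column strip, and I would argue that the tableaux standardizing to $\rho^{-1}(F)$ are precisely its destandardizations, namely the coarsenings of its weight that collapse these runs.

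The heart of the argument is to identify the admissible coarsenings as exactly the compositions $\gamma\preceq\beta(D'(F))$. The coarsest replaces each run $\{b_{i-1}+1,\ldots,b_i\}$ by a single repeated value, producing weight $\beta(D'(F))$ and remaining reverse row-strict precisely because each run is a column strip; every finer merge is admissible as well. Conversely, a weight that merges two entries straddling a transpose descent $b_i$ cannot occur: since $b_i+1$ sits strictly to the left of $b_i$, re-standardizing the merged value would assign the smaller label to the cell that originally held $b_i+1$, reversing the pair and returning a standard tableau different from $\rho^{-1}(F)$. As each admissible $\gamma$ is realized by exactly one destandardization for every increasing choice of values used on the runs, summing yields $\sum_{\gamma\preceq\beta(D'(F))}M_{\gamma}=F_{\beta(D'(F))}$, and summing over all standard RSCTs of shape $\alpha$ completes the proof. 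I expect the main obstacle to be exactly this last characterization: reconciling the reading order built into standardization with the transpose-descent condition so that precisely the merges across transpose descents are excluded while all finer ones are retained (and checking that the collapsed runs still satisfy the remaining RSCT conditions, which the transfer through $\rho$ reduces to the transparent row and column monotonicity of reverse row-strict tableaux).
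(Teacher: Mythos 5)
Your proposal is correct and takes essentially the same approach as the paper: both arguments identify each fundamental quasisymmetric function with the generating function of a standardization fiber and transfer the problem through the weight-preserving bijection $\rho$, using Proposition~\ref{prop:standard} (commutation with standardization) and the fact that $\rho$ preserves column sets and hence transpose descent sets. The only organizational difference is that the paper additionally transposes to reverse column-strict tableaux so it can quote the fiber claim from the proof of Theorem~\ref{schur2fund} and then translate $D(T)$ into the transpose descent set $D'(T')$, whereas you re-derive the dual fiber identity directly in the row-strict setting (runs between transpose descents forming vertical strips), which is an equally valid shortcut.
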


\begin{proof}
We claim that
\begin{equation}{\label{Fdecomp}}
\rowq_{\alpha} = \sum_T F_{\beta(D'(T))},
\end{equation} where the sum is over all row-strict standard $\comts$ $T$ of shape $\alpha$.  The proposition follows easily from this claim since $d_{\alpha \beta}$ counts the number of times $F_{\beta}$ appears on the right hand side of \ref{Fdecomp}.

To prove the claim, first note that $\rowq_{\alpha}$ is a sum of monomials arising from row-strict composition tableaux of shape $\alpha$.  Each such filling $F$ maps to a reverse row-strict tableau via the bijection $\rho^{-1}$, which then maps to a reverse column-strict  tableau under the transpose bijection.  Consider an arbitrary standard reverse column-strict tableau $T$ appearing in the image.   Since standardization does not alter the shape of the composition diagram and commutes with $\rho^{-1}$ by Proposition {\ref{prop:standard}}, all of the reverse 
column-strict tableaux which standardize to $T$ appear in the image.  This implies that $\rowq_{\alpha}$ is a positive sum of fundamental quasisymmetric functions since each fundamental quasisymmetric function is a sum of the monomials arising from a standard reverse row-strict tableau $T$ together with the reverse row-strict tableaux which standardize to $T$. 

To determine the fundamental quasisymmetric functions appearing in $\rowq_{\alpha}$, consider an arbitrary standard reverse row-strict tableau $T$ appearing in the image of the row-strict composition tableaux of shape $\alpha$ under the composition of the maps $\rho^{-1}$ and transpose.  The descent set $D(T)$ of this tableau determines the fundamental quasisymmetric function containing the monomial associated to $T$ by Theorem {\ref{schur2fund}}.  Recall that $D(T)$ is the set of all $i$ such that $i+1$ appears in a column weakly to the right of the column containing $i$, and in fact this means that $i+1$ must appear in a row strictly above $i$ since the row and column entries are decreasing.  This implies that in the transpose $T'$ of $T$ the entry $i+1$ must appear in a column strictly to the left of $i$ and hence $i$ is contained in the transpose descent set.  Similarly, if $i$ is not in the descent set of $T$ then $i+1$ appears in a row weakly below $i$ and therefore $i$ is not in the transpose descent set of $T'$.  The map $\rho$ preserves transpose descent sets since it preserves the entries in a given column.  Therefore the fundamental quasisymmetric functions appearing in $\rowq_{\alpha}$ are precisely those indexed by the compositions corresponding to the transpose descent sets of the standard row-strict composition tableaux.
\end{proof}

\begin{theorem}
The set $\{\rowq_{\alpha}(x_1, \ldots, x_k) | \alpha \vDash n \; {\rm and} \; k \ge n \}$ forms a $\mathbb{Z}$-basis for $QSym_n(x_1, \ldots, x_k)$.
\end{theorem}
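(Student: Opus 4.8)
The plan is to bypass any direct triangularity computation and instead transfer the basis property from the quasisymmetric Schur functions $\qschur_\alpha$, which the excerpt already records as forming a ($\mathbb{Z}$-)basis of $QSym_n$. Since Gessel's fundamental functions $\{F_\beta : \beta \vDash n\}$ are themselves a $\mathbb{Z}$-basis (the expansion $F_\beta = \sum_{\gamma \preceq \beta} M_\gamma$ is unitriangular for refinement order), it will be enough to express each $\rowq_\alpha$ in the fundamental basis via a matrix that agrees, up to relabelling, with the change-of-basis matrix from $\{\qschur_\alpha\}$ to $\{F_\beta\}$, which lies in $GL(\mathbb{Z})$ because it connects two $\mathbb{Z}$-bases of a free $\mathbb{Z}$-module. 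The two ingredients I would exploit are Propositions \ref{qschur2fund} and \ref{rowq2fund}, which give the fundamental expansions of $\qschur_\alpha$ and $\rowq_\alpha$ in terms of standard tableaux and their (transpose) descent sets.

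First I would establish the structural heart of the argument: for \emph{standard} fillings, a column-strict composition tableau and a row-strict composition tableau of the same shape $\alpha$ are the same object. When all entries are distinct the ``weakly decreasing'' and ``strictly decreasing'' row conditions agree, and likewise the leftmost-column conditions agree, so the only real content is the equivalence of the column-strict and row-strict triple rules. I would verify this by a finite case analysis on the triple of cells $(i,k-1),(i,k),(j,k)$ according to which of $\hat F(i,k-1),\hat F(i,k)$ are genuine entries and which are padding zeros at the ends of rows; in each case the two implications impose exactly the same constraint on $\hat F(j,k)$. Tracking the zero cells correctly here is the one delicate (though elementary) point, and I expect it to be the main obstacle.

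Next I would compare the descent statistics. For a standard tableau $T$ each pair $i,i+1$ occupies well-defined columns, so ``$i+1$ weakly to the right of $i$'' and ``$i+1$ strictly to the left of $i$'' are complementary events; hence $D'(T) = [n-1]\setminus D(T)$, and therefore $\setcomp(D'(T)) = \widetilde{\setcomp(D(T))}$ is the complementary composition. Combining this with the coincidence of the standard tableau sets and reading off the coefficients in Propositions \ref{qschur2fund} and \ref{rowq2fund} gives $d_{\alpha\beta} = \tilde{d}_{\alpha,\tilde{\beta}}$, where $\tilde{d}$ denotes the $\qschur$ coefficients. Reindexing by the complement then yields
$$\rowq_\alpha = \sum_{\gamma \vDash n} \tilde{d}_{\alpha\gamma}\, F_{\tilde{\gamma}}.$$

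Finally I would conclude. Complementation $\gamma \mapsto \tilde{\gamma}$ is an involution on the compositions of $n$, so $\{F_{\tilde{\gamma}} : \gamma \vDash n\}$ is literally the fundamental $\mathbb{Z}$-basis reindexed, and the coefficient matrix $(\tilde{d}_{\alpha\gamma})$ above is exactly the change-of-basis matrix from $\{\qschur_\alpha\}$ to $\{F_\gamma\}$, hence invertible over $\mathbb{Z}$. Thus $\{\rowq_\alpha : \alpha \vDash n\}$ is obtained from a $\mathbb{Z}$-basis by a $\mathbb{Z}$-invertible transformation and is itself a $\mathbb{Z}$-basis. To obtain the stated finite-variable form I would note that for $k \ge n$ every $\alpha \vDash n$ satisfies $\ell(\alpha) \le n \le k$, so none of the $M_\alpha$ collapses; hence $QSym_n(x_1,\ldots,x_k)$ is free of rank $2^{n-1}$ and all of the identities above hold verbatim in $k$ variables.
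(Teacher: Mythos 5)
Your proof is correct, and it takes a genuinely different route from the paper's. The paper establishes the theorem directly, by showing that the transition matrix from the $\rowq_{\alpha}$ to the $F_{\beta}$ is upper uni-triangular: it orders the $\rowq_{\alpha}$ by revlex on compositions and the $F_{\beta}$ by revlex on their complements $\tilde{\beta}$, and observes that the runs of consecutive entries of a standard row-strict composition tableau $F$ complementary to $D'(F)$ sit as horizontal strips inside the rows of $\alpha$, whence $\shape(\tilde{\beta}) \le \shape(\alpha)$ in dominance order, with equality forcing $\tilde{\beta}=\alpha$. You never prove triangularity; instead you transfer $\mathbb{Z}$-invertibility from the quasisymmetric Schur basis. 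Your structural lemma is correct: on standard fillings a padding zero can satisfy neither triple-rule hypothesis, and distinctness of the nonzero entries turns each weak/strict pair of inequalities into an equivalence, so the standard column-strict and standard row-strict composition tableaux of shape $\alpha$ are literally the same set; combined with $D'(T)=[n-1]\setminus D(T)$ this yields $d'_{\alpha\beta} = d_{\alpha\tilde{\beta}}$ between the coefficients of Propositions \ref{rowq2fund} and \ref{qschur2fund}, so your transition matrix is the $\qschur$-to-$F$ matrix composed with the complementation permutation, hence lies in $GL_{2^{n-1}}(\mathbb{Z})$. In effect you prove Theorem \ref{omegathm} combinatorially first and deduce the basis theorem from it, reversing the paper's logical order. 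The trade-off: the paper's proof is self-contained and exhibits the explicit triangular structure (leading term $F_{\tilde{\alpha}}$ with coefficient $1$, visible in Figure \ref{ex:matrixa}), whereas yours is shorter and isolates a clean structural fact, but it leans on $\{\qschur_{\alpha}\}$ being a $\mathbb{Z}$-basis rather than merely a $\mathbb{Q}$-basis — the paper only says ``basis,'' so to justify $GL(\mathbb{Z})$ you should invoke the unitriangularity of the $\qschur$-to-monomial transition proved in \cite{HLMvW09}. Your finite-variable reduction ($k \ge n$ keeps every $M_{\gamma}$ with $\gamma \vDash n$ nonzero and independent) matches the paper's use of $k \ge n$. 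As a sanity check, your identity $d'_{\alpha\beta}=d_{\alpha\tilde{\beta}}$ is confirmed by comparing Figures \ref{ex:matrixa} and \ref{ex:matrixb} — modulo what appear to be typos in the rows $121$ and $112$ of the latter, which as printed has two identical rows and so could not be a change-of-basis matrix.
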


\begin{proof}
Let ${ \bf x_k}$ denote $(x_1, \ldots, x_k)$. 
It is enough to prove that the transition matrix from row-strict quasisymmetric Schur functions to fundamental quasisymmetric functions is upper uni-triangular with respect to a certain order on compositions.  Order the compositions indexing the row-strict quasisymmetric Schur functions by the revlex order, $\ge_r$, where $\alpha \ge_r \beta$ if and only if either $\shape(\alpha)  > \shape(\beta)$ or $\shape(\alpha) = \shape(\beta)$ and $\alpha \ge_{lex} \beta$.  Order the compositions indexing the fundamental quasisymmetric functions by the revlex order on their complementary compositions.

Fix a positive integer $n$ and a composition $\alpha=(\alpha_1, \alpha_2, \hdots , \alpha_{\ell(\alpha)})$ of $n$.  Consider a summand $F_{\beta}({\bf x_k})$ appearing in $\rowq_{\alpha}({\bf x_k})$ and consider the composition $\tilde{\beta}$ complementary to $\beta$.  Since $k \ge n$ we know that $F_{\beta}({\bf x_k})$ is nonzero.  We claim that $\shape(\tilde{\beta}) \le \shape(\alpha)$ and, moreover, if $\shape(\tilde{\beta}) = \shape(\alpha)$ then $\tilde{\beta} = \alpha$.

Since $F_{\beta}({\bf x_k})$ appears in $\rowq_{\alpha}({\bf x_k})$, there must exist a standard reverse row-strict composition tableau $F$ of shape $\alpha$ with transpose descent set $D=\{b_1, b_2, \hdots b_k\}$ such that $\bm\beta(D)=\beta$.  Each entry $i$ in $D$ must appear in $F$ strictly to the right of $i+1$.  Therefore the consecutive entries in $D$ appear in $F$ as horizontal strips.  Each such horizontal strip corresponds to a part of $\tilde{\beta}$ since $\tilde{\beta}$ arises from the complement of $D$.  This implies that the sum of the largest $j$ parts of $\tilde{\beta}$ must be less than or equal to the sum of the largest $j$ parts of $\alpha$.  Therefore $\shape(\tilde{\beta}) \le \shape(\alpha)$.

Assume that $\shape(\tilde{\beta}) = \shape(\alpha)$.  Since the length of $\tilde{\beta}$ is equal to the length of $\alpha$, the horizontal strips corresponding to the parts of $\tilde{\beta}$ must be left-justified in $F$ and hence the strip corresponding to $\beta_i$ must begin in the $i^{th}$ row of $\alpha$.  Therefore $\tilde{\beta}_1 \ge \alpha_1$ since the row entries of $F$ must be strictly decreasing from left to right.  Iterating this argument implies that $\tilde{\beta}_i = \alpha_i$ for all $i$, and hence $\tilde{\beta} = \alpha$ as claimed.  Therefore the transition matrix is upper uni-triangular and the proof is complete.
\end{proof}

See Figure {\ref{ex:matrixa}} for an example of the transition matrix from 
the row-strict quasisymmetric Schur functions to fundamental quasisymmetric functions for $n=4$. Figure {\ref{ex:matrixb}} gives the transition matrix from 
the quasisymmetric Schur functions to the fundamental quasisymmetric functions for $n=4$.

\begin{figure}
\begin{center}
\subfigure[The transition matrix from row-strict quasisymmetric Schur functions to fundamental quasisymmetric Schur functions]{
\begin{tabular}{l || c | c | c | c | c | c | c | c |}
 $\rowq_{\alpha}\backslash F_\beta$ & 1111 & 112 & 211 & 121 & 13 & 22 & 31 & 4 \\
\hline
\hline
4 & 1 & 0 & 0 & 0 & 0 & 0 & 0 & 0 \\
\hline
31 & 0 & 1 & 0 & 0 & 0 & 0 & 0 & 0 \\
\hline
13 & 0 & 0 & 1 & 1 & 0 & 0 & 0 & 0 \\
\hline
22 & 0 & 0 & 0 & 1 & 0 & 1 & 0 & 0 \\
\hline
211 & 0 & 0 & 0 & 0 & 1 & 0 & 0 & 0 \\
\hline
121 & 0 & 0 & 0 & 0 & 0 & 1 & 0 & 0 \\
\hline
112 & 0 & 0 & 0 & 0 & 0 & 0 & 1 & 0 \\
\hline
1111 & 0 & 0 & 0 & 0 & 0 & 0 & 0 & 1 \\
\hline
\end{tabular}
\label{ex:matrixa}
}\\
\subfigure[The transition matrix from quasisymmetric Schur functions to fundamental quasisymmetric Schur functions]{
\begin{tabular}{l || c | c | c | c | c | c | c | c |}
 $\qschur_{\alpha}\backslash F_\beta$& 1111 & 112 & 211 & 121 & 13 & 22 & 31 & 4 \\
\hline
\hline
4 & 0 & 0 & 0 & 0 & 0 & 0 & 0 & 1 \\
\hline
31 & 0 & 0 & 0 & 0 & 0 & 0 & 1 & 0 \\
\hline
13 & 0 & 0 & 0 & 0 & 1 & 1 & 0 & 0 \\
\hline
22 & 0 & 0 & 0 & 1 & 0 & 1 & 0 & 0 \\
\hline
211 & 0 & 0 & 1 & 0 & 0 & 0 & 0 & 0 \\
\hline
121 & 0 & 0 & 0 & 1 & 0 & 1 & 0 & 0 \\
\hline
112 & 0 & 1 & 0 & 0 & 0 & 0 & 1 & 0 \\
\hline
1111 & 1 & 0 & 0 & 0 & 0 & 0 & 0 & 0 \\
\hline
\end{tabular}
\label{ex:matrixb}
}
\end{center}
\end{figure}


\section{A linear endomorphism of QSym}{\label{omega}}

An algebra endomorphism $\omega: \Lambda \rightarrow \Lambda$ on symmetric functions is defined by $\omega(e_n)=h_n, \; n \ge 1$.  The definitions of $e_{\lambda}$ and $h_{\lambda}$ imply that $\omega(e_{\lambda}) = h_{\lambda}$ since $\omega$ preserves multiplication.  The endomorphism is an involution and $\omega(s_{\lambda}) = s_{\lambda'}$, where $\lambda'$ is the transpose of the partition $\lambda$, often referred to as the conjugate partition~\cite{Hal59, Hal88, Sta71}.

\subsection{An extension of the $\omega$ automorphism of symmetric functions}

\begin{theorem}{\label{omegathm}}
The $\omega$ operator maps quasisymmetric Schur functions to row-strict quasisymmetric Schur functions.  That is,
$$\omega(\qschur_{\alpha}(x_1, \hdots , x_n)) =\rowq_{\alpha}(x_n, \hdots, x_1).$$
\end{theorem}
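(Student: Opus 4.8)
The plan is to compare both sides in Gessel's fundamental basis, where the action of $\omega$ is explicit, and to reduce the identity to a single statement about descent sets of standard composition tableaux. Write $\mathbf{x}=(x_1,\dots,x_n)$ and $\bar{\mathbf{x}}=(x_n,\dots,x_1)$, and let $d^c_{\alpha\beta}$ (resp.\ $d^r_{\alpha\beta}$) be the number of standard column-strict (resp.\ row-strict) composition tableaux of shape $\alpha$ whose descent data satisfy $\setcomp(D(T))=\beta$ (resp.\ $\setcomp(D'(F))=\beta$). By Proposition~\ref{qschur2fund} and the rule $\omega(F_\beta)=F_{\mathrm{rev}(\tilde\beta)}$,
\[
\omega(\qschur_\alpha(\mathbf{x}))=\sum_\beta d^c_{\alpha\beta}\,\omega(F_\beta)=\sum_\beta d^c_{\alpha\beta}\,F_{\mathrm{rev}(\tilde\beta)}.
\]
On the other side, a one-line monomial computation shows $M_\gamma(\bar{\mathbf{x}})=M_{\mathrm{rev}(\gamma)}(\mathbf{x})$, and since the refinement order is preserved under reversal this gives $F_\beta(\bar{\mathbf{x}})=F_{\mathrm{rev}(\beta)}(\mathbf{x})$; feeding this into Proposition~\ref{rowq2fund} yields
\[
\rowq_\alpha(\bar{\mathbf{x}})=\sum_\beta d^r_{\alpha\beta}\,F_{\mathrm{rev}(\beta)}(\mathbf{x}).
\]
Because the fundamentals $F_\gamma$ with $\gamma\vDash n$ are linearly independent in $n$ variables (a consequence of the basis theorem just established), I would match the coefficient of each $F_\gamma$, obtaining $d^c_{\alpha,\widetilde{\mathrm{rev}(\gamma)}}=d^r_{\alpha,\mathrm{rev}(\gamma)}$; writing $\delta=\mathrm{rev}(\gamma)$ and using that $\mathrm{rev}$ is a bijection on compositions of $n$, the theorem reduces to the single combinatorial identity $d^r_{\alpha\delta}=d^c_{\alpha,\tilde\delta}$ for every $\delta\vDash n$.

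To prove this identity I would exhibit a descent-complementing bijection between the two families of standard tableaux. The key observation is that for standard fillings the notions of reverse column-strict and reverse row-strict tableau coincide: both are reverse standard Young tableaux. Hence Lemma~\ref{RCTtoTAB} (via the row-strict map $\rho$) and the analogous bijection of~\cite{Mas08} (for the column-strict case) each identify the standard composition tableaux of shape $\alpha$ with the reverse standard Young tableaux of shape $\shape(\alpha)$. Composing them gives a bijection $T\leftrightarrow F$ between standard column-strict and standard row-strict composition tableaux of shape $\alpha$, passing through a single reverse standard Young tableau $U$ of shape $\shape(\alpha)$. Since both $\rho$-type maps preserve the set of entries in each column, this bijection carries $D(T)$ to the descent set $D(U)$ and $D'(F)$ to the transpose descent set $D'(U)$ of that same $U$.

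It then remains to check that $D(U)$ and $D'(U)$ are complementary in $[n-1]$, which is immediate: for each $i$ the entry $i+1$ of $U$ lies either weakly to the right of $i$, so $i\in D(U)$, or strictly to the left of $i$, so $i\in D'(U)$, and exactly one alternative holds. Thus $D'(U)=[n-1]\setminus D(U)$, so $\setcomp(D'(F))$ is the complement of $\setcomp(D(T))$; consequently the tableaux counted by $d^c_{\alpha,\tilde\delta}$ correspond bijectively to those counted by $d^r_{\alpha\delta}$, giving $d^r_{\alpha\delta}=d^c_{\alpha,\tilde\delta}$ and completing the argument.

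I expect the main obstacle to be bookkeeping rather than substance: one must track the interaction of three operations, complementation $\beta\mapsto\tilde\beta$, index reversal $\mathrm{rev}$, and reversal of the variables $\mathbf{x}\mapsto\bar{\mathbf{x}}$, so that they cancel correctly, since the genuine tableau content collapses to the one-line fact $D'(U)=D(U)^c$. I would also take care to justify applying $\omega$ (defined on all of $Qsym$) to $\qschur_\alpha$ and then specializing to $n$ variables; this is harmless because $\qschur_\alpha$ is homogeneous of degree $n$ and the relevant $F_\gamma$ with $\gamma\vDash n$ remain independent in $n$ variables.
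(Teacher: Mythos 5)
Your proof is correct and is essentially the paper's own argument with the implicit steps made explicit: the paper likewise expands $\qschur_{\alpha}$ via Proposition~\ref{qschur2fund}, applies $\omega(F_{\beta})=F_{\mathrm{rev}(\tilde{\beta})}$, trades index reversal for variable reversal, and concludes by complementarity of descent sets for standard fillings, which is precisely your coefficient identity $d^{r}_{\alpha\delta}=d^{c}_{\alpha,\tilde{\delta}}$. One spot worth tightening: preservation of column sets does not by itself show that your composed bijection returns a composition tableau of the same shape $\alpha$, but this is immediate once you note that on standard fillings all comparisons are between distinct entries, so weak and strict inequalities agree, the two $\rho$-type maps coincide, and hence a standard column-strict composition tableau of shape $\alpha$ is literally the same object as a standard row-strict composition tableau of shape $\alpha$, with $D'(T)=[n-1]\setminus D(T)$.
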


\begin{proof}
Proposition \ref{qschur2fund} states that $\qschur_{\alpha}$ can be written as a positive sum of fundamental quasisymmetric functions in the following way.

\begin{equation}{\label{qschur}}
\qschur_{\alpha}(x_1, \hdots , x_n) = \sum_{\beta} d_{\alpha \beta} F_{\beta}(x_1, \hdots , x_n),
\end{equation}
where $d_{\alpha \beta}$ is equal to the number of standard column-strict composition tableaux $T$ of shape $\alpha$ such that $\bm\beta (D(T)) = \beta$.  Apply $\omega$ to Equation \ref{qschur} to get
\begin{eqnarray*}
\omega(\qschur_{\alpha}(x_1, \hdots , x_n))  & = & \sum_{\beta} d_{\alpha \beta} \omega(F_{\beta}(x_1, \hdots , x_n)) \\
 & = & \sum_{\beta} d_{\alpha \beta} F_{rev(\tilde{\beta})}(x_1, \hdots , x_n) \\
& = & \sum_{\beta} d_{\alpha \beta} F_{\tilde{\beta}}(x_n, \hdots , x_1) \\
& = & \rowq_{\alpha}(x_n, \hdots , x_1)
\end{eqnarray*}
The second equality above results directly from the definition of the $\omega$ operator on the fundamental quasisymmetric functions~\cite{Ehr96},~\cite{Ges84},~\cite{MalReu95} and the last equality results from the fact that transposing a standard Young tableau replaces the descent set $D$ with the complementary set $D^c$.
\end{proof}

\subsection{A notion of conjugation for compositions}

The $\omega$ operator applied to a Schur function $s_{\lambda}$ produces the Schur function $s_{\lambda'}$ indexed by the conjugate partition to $\lambda$.  Since the quasisymmetric Schur functions and the row-strict quasisymmetric Schur functions are generated by sums of monomials arising from fillings of composition diagrams, it is natural to seek a conjugation-like operation on composition diagrams which preserves the transpose of the underlying partition.  

This idea at first seems to be too much to ask since there cannot be a bijection between compositions that rearrange to a given partition $\lambda$ and compositions that rearrange to its conjugate partition $\lambda'$ for 
all $\lambda$.  Consider for example the partition $\lambda=(2,1,1)$.  There are three compositions ($(2,1,1), (1,2,1)$ and $(1,1,2)$) which rearrange $\lambda$ but only two compositions ($(3,1)$ and $(1,3)$) which rearrange $\lambda'=(3,1)$.  However, the $\omega$ operator can be used to collect the compositions which rearrange a given partition so that there exists a bijection between these collections and the collections corresponding to the conjugate partition.  In particular, note that since $\omega$ sends a quasisymmetric Schur function to a row-strict quasisymmetric Schur function, a method for writing the quasisymmetric Schur functions in terms of the row-strict quasisymmetric Schur functions and vice versa would allow us to interpret the indexing compositions in much the same way as we interpret the indexing partitions for Schur functions and their images under $\omega$.

Recall that the Schur functions can be expanded into sums of either quasisymmetric Schur functions or dual quasisymmetric Schur functions as follows:  $$\sum_{\shape(\alpha)=\lambda} \qschur_{\alpha} = s_{\lambda} = \sum_{\shape(\beta)=\lambda'} \rowq_{\beta}.$$  This suggests that there must be a weight-preserving bijection between column-strict composition tableaux and row-strict composition tableaux which transposes the shape of the underlying partition.

Given an arbitrary column-strict composition tableau $F$, choose the largest entry in each column of $F$ and construct the leftmost column of $\phi(F)$ by placing these entries (the collection $C_1$) in weakly increasing order from top to bottom.  Then choose the second-largest entry from each column of $F$ (ignoring empty cells) and insert this collection ($C_2$) into the new diagram by the following procedure.  Place the largest entry $e_i$ into the highest position in the second column so that the entry immediately to the left of $e_i$ is strictly greater than $e_i$.  Repeat this insertion with the next largest entry, considering only the unoccupied positions.  Continue this procedure with the remainder of the entries in this collection from largest to smallest until the entire collection has been inserted.  Then repeat the procedure for the third largest entry in each column ($C_3$) of $F$ and continue inserting collections of entries until all entries of $F$ have been inserted.  The resulting diagram is $\phi(F)$.  See Figure {\ref{conj}} for an example.

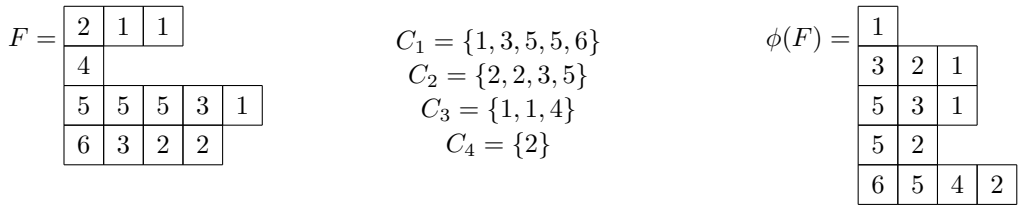
\begin{figure}{\label{conj}}
\begin{center}
\begin{picture}(400,100)
\put(0,60){ $F=\tableau{2 & 1 & 1 \\ 4 \\ 5 & 5 & 5 & 3 & 1 \\ 6 & 3 & 2 & 2 }$} 
\put(150,20){\shortstack[b]{$C_1=\{1,3,5,5,6\}$ \\ $C_2=\{2,2,3,5\}$ \\ $C_3=\{1,1,4\}$ \\ $C_4=\{2\}$ }}
\put(290,60){$\phi(F)=\tableau{1 \\ 3 & 2 & 1 \\ 5 & 3 & 1 \\ 5 & 2 \\ 6 & 5 & 4 & 2}$}
\end{picture}
\caption{The map $\phi$ from a $CSCT$ to an $RSCT$}
\end{center}
\end{figure}

\begin{proposition}
The map $\phi$ is a weight-preserving bijection between column-strict composition tableaux which rearrange a given partition and row-strict composition tableaux which rearrange the conjugate partition.
\end{proposition}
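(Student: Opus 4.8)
The plan is to recognize that $\phi$ is not a genuinely new construction but simply a repackaging of three bijections already available to us, so that the RSCT axioms need not be re-verified from scratch. Recall from \cite{Mas08} that there is a weight-preserving bijection $\psi$, entirely analogous to the map $\rho$ of Lemma~\ref{RCTtoTAB}, between reverse column-strict tableaux of shape $\lambda$ and column-strict composition tableaux of shape $\alpha$ with $\shape(\alpha)=\lambda$; its inverse $\psi^{-1}$ rearranges the entries of each column of a CSCT into strictly decreasing order from top to bottom, and in particular preserves the multiset of entries in each column. First I would prove the factorization
$$\phi = \rho \circ (\,\cdot\,)' \circ \psi^{-1},$$
where $(\,\cdot\,)'$ denotes transposition, sending a reverse column-strict tableau of shape $\lambda$ to a reverse row-strict tableau of shape $\lambda'$.

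To establish this identity, set $T=\psi^{-1}(F)$, a reverse column-strict tableau of shape $\lambda$. Since $\psi$ preserves column content and the columns of $T$ strictly decrease, the $i^{th}$ largest entry of the $j^{th}$ column of $F$ is exactly $T(i,j)$; hence the collection $C_i$ extracted by $\phi$ is precisely the set of entries in the $i^{th}$ row of $T$, which is the $i^{th}$ column of the transpose $T'$. Thus $\phi$ reads off the columns of $T'$ in the order $C_1, C_2, \ldots$, exactly as $\rho$ processes the columns of $T'$ from left to right. It then remains to check that the insertion rule of $\phi$ — place the entries of $C_i$ from largest to smallest into the highest unoccupied cell whose immediate left neighbor is strictly greater — is verbatim the insertion rule defining $\rho$. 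This comparison is the crux of the argument and the only place requiring care: one must confirm that the two procedures handle empty cells and repeated values identically, and that the column entries of $T$ are genuinely distinct so that ``the $i^{th}$ largest entry'' is unambiguous (which follows from the strict decrease of columns in a reverse column-strict tableau).

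Granting the factorization, the conclusion follows at once. Each factor is a weight-preserving bijection: $\psi^{-1}$ by \cite{Mas08}, transposition trivially (it permutes cells and so preserves the weight), and $\rho$ by Lemma~\ref{RCTtoTAB}. Moreover the shapes are carried through the chain from a CSCT rearranging $\lambda$, to a reverse column-strict tableau of shape $\lambda$, to a reverse row-strict tableau of shape $\lambda'$, to an RSCT rearranging $\lambda'$. Hence the composite $\phi$ is a weight-preserving bijection from the column-strict composition tableaux that rearrange $\lambda$ onto the row-strict composition tableaux that rearrange $\lambda'$, as claimed. I expect the main obstacle to be the verbatim matching of the two insertion rules in the middle step; once that identification is nailed down, everything else is bookkeeping resting on results already established.
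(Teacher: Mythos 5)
Your proposal is correct, but it organizes the argument differently from the paper. The paper's proof works directly with the definition of $\phi$: it first establishes well-definedness by a counting argument (the $i^{th}$ largest entry of $C_j$ is strictly smaller than its column-mate in $C_{j-1}$ and weakly smaller than the $i-1$ larger entries of $C_j$, hence strictly smaller than at least $i$ entries of $C_{j-1}$, so an admissible slot exists), then verifies the row-strict triple rule for $\phi(F)$ by an insertion-order argument, and only at the end invokes the factorization $\phi = \rho_r^{-1} \circ \tau \circ \rho_c^{-1}$ --- stated without verification --- to obtain bijectivity and the explicit inverse. You instead make the factorization carry the entire proposition: once $\phi = \rho \circ \tau \circ \psi^{-1}$ is established, well-definedness, the RSCT conditions, weight preservation, bijectivity, and conjugation of the underlying partition are all inherited from Lemma~\ref{RCTtoTAB}, the cited bijection of \cite{Mas08}, and the trivial properties of transposition. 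This is more economical, since it avoids re-proving a triple-rule verification that essentially duplicates work already done for $\rho$, and it supplies exactly the detail the paper omits: your observations that $\psi^{-1}$ preserves column multisets, that the entries within each column of a CSCT are distinct (so ``the $i^{th}$ largest entry'' is unambiguous and $C_i$ is precisely row $i$ of $T$, i.e.\ column $i$ of $T'$), and that the two insertion rules then coincide word for word, constitute a proof of the very identity the paper merely asserts. Two small points to nail down when you execute the crux step in full: the distinctness you need is of entries within each column of $F$ itself, and it follows from the column-strict triple rule together with the weak decrease along rows (if $\hat{F}(j,k)=\hat{F}(i,k)$ with $i<j$, the triple rule forces $\hat{F}(j,k) > \hat{F}(i,k-1) \ge \hat{F}(i,k)$, a contradiction), not from properties of $T$, which is what that distinctness is used to construct; and within a single collection $C_i$ repeated values can occur (e.g.\ $C_1 = \{1,3,5,5,6\}$ in the paper's worked example of $\phi$), so you should note that both procedures insert the entries of $C_i$ in weakly decreasing order and that the outcome depends only on the multiset, equal entries being interchangeable. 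With those remarks added, your route is complete and, if anything, shores up the one unproved assertion in the paper's own proof.
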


\begin{proof}
We must first prove that the map $\phi$ is well-defined.  Consider the $i^{th}$ largest entry in an insertion collection $C_j$.  This entry is strictly smaller than the entry from its column in $C_{j-1}$.  It is also weakly smaller than the $i-1$ largest entries in the collection $C_j$ and hence strictly smaller than at least $i$ of the entries in the collection $C_{j-1}$.  Therefore there exists at least one position in which this entry may be placed and the map is well-defined.  
The fact that $\phi$ is weight-preserving and conjugates the underlying partition is immediate from the definition of $\phi$.  

To see that $\phi(F)$ is indeed a row-strict composition tableau, first note that conditions (1) and (2) are satisfied by construction.  Next consider a pair of entries $e_1$ and $e_2$ in the same column of $\phi(F)$ such that $e_1 < e_2$ and $e_1$ is in a higher row of $\phi(F)$ than $e_2$.  Then $e_2$ was inserted before $e_1$ and hence the position containing $e_1$ was available during the insertion of $e_2$.  Since $e_2$ was not placed in this position, the entry in the cell immediately to the left of $e_1$ must be weakly smaller than $e_2$.  Therefore condition (3) is satisfied and $\phi(F)$ is a row-strict composition tableau.

Let $\rho_r^{-1}$ be the map from reverse row-strict tableaux to row-strict composition tableaux given in this paper, let $\rho_c^{-1}$ be the analogous map from reverse column-strict tableaux to column-strict composition tableaux given in~\cite{HLMvW09}, and let $\tau$ be the transposition map that sends a reverse semi-standard Young tableau to its conjugate by reflecting across the main diagonal.  Then $\phi = \rho_r^{-1} \circ \tau \circ \rho_c^{-1}$.  The map $\phi$ is therefore a bijection with inverse $\phi^{-1}$ given by $\phi^{-1} = \rho_c \circ \tau \circ \rho_r$.
\end{proof}

We note that $\phi$ does not always preserve compositions shapes. That is, 
it is not necessarily the case if $T_1$ and $T_2$ are column strict 
composition tableaux of the same shape, the $\phi(T_1)$ and 
$\phi(T_2)$ have the same shape. See Figure {\ref{conjugate}} for an example. 
Indeed, its is not possible to define such a shape preserving map 
since the number of compositions that rearrange to $\lambda$ is not 
always the same as the number of compositions that rearrange to 
$\lambda'$.

\begin{figure}
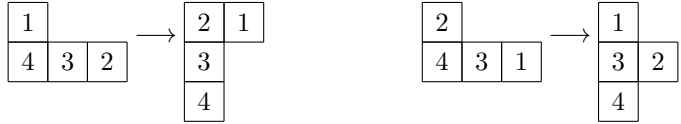
{\label{conjugate}}
$$\tableau{1 \\ 4 & 3 & 2} \longrightarrow \tableau{2 & 1 \\ 3 \\ 4} \qquad \qquad \qquad \tableau{2 \\ 4 & 3 & 1} \longrightarrow \tableau{1 \\ 3 & 2 \\ 4}$$
\caption{The compositions $(2,1,1)$ and $(1,2,1)$ are conjugate to $(1,3)$}
\end{figure}

\section{An extension of dual Schensted insertion}{\label{Dual}}

Schensted insertion provides a method for inserting an arbitrary positive integer into an arbitrary column-strict tableau.  This insertion process forms the foundation for the well-known Robinson-Schensted-Knuth (RSK) algorithm which produces a bijection between matrices with non-negative integer coefficients and pairs of reverse column-strict tableaux of the same shape.  Mason \cite{Mas08} 
gave an extension of RSK algorithm to insert an integer into a 
column-strict composition tableau and 
Haglund, Luoto, Mason and van Willigenburg \cite{HLMvW10} used 
to insertion procedure to prove the refinement of the 
Littlewood-Richardson rule which gives a combinatorial 
interpretation to the coefficients that arise in the expansion of  
the product of a Schur function times a quasisymmetric Schur function  
into a positive sum of quasisymmetric Schur functions.

In this section, we define an extension of dual Schensted insertion. 
This extension is used by Ferreira~\cite{Fer11} to give another refinement of the Littlewood-Richard rule which provides a combinatorial 
interpretation for the coefficients that arise in the expansion  
the product of a Schur function times a row-strict 
quasisymmetric Schur function 
into a positive sum of quasisymmetric Schur functions.

Dual Schensted insertion inserts an arbitrary positive integer into a reverse row-strict tableau by bumping entries from row to row.  That is, given a reverse row-strict tableau $T$ and a positive integer $x$, first set $R$ equal to the first row of $T$.  Let $y$ be the largest entry in $R$ less than or equal to $x$.  Replace $y$ by $x$ in $R$.  Set $x=y$ and set $R$ equal to the next row down and repeat.  If there is no such entry $y$ which is less than or equal to $x$, place $x$ at the end of row $R$ and stop.   The resulting figure is denoted $T \leftarrow x$.  A more detailed exposition of Schensted insertion and its variations can be found in \cite{Knu73} or \cite{Sta99}.  We need only the following Lemma concerning the insertion path.

\begin{lemma}{\label{rowinsertion}}
The insertion path consisting of all cells affected by the insertion process moves weakly to the left as the cells are listed from top to bottom.
\end{lemma}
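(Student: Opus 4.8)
The plan is to track the dual (row) insertion process row-by-row and argue that each successive insertion position lies weakly left of the previous one. The key observation is that dual Schensted insertion into a reverse row-strict tableau bumps entries downward, and I want to show that if row $r$ is affected in column $c_r$, then row $r+1$ is affected in some column $c_{r+1} \le c_r$.

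\medskip

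First I would fix notation for a single bumping step. Suppose we are inserting a value $x$ into a row $R$ (the current row), and $y$ is the largest entry in $R$ that is less than or equal to $x$, sitting in column $c$; we replace $y$ by $x$ in column $c$, then push $y$ down to become the new inserted value for the next row. The goal is to locate where $y$ lands in the row below and show that column is $\le c$. To do this, I would compare the entry directly below $y$ in the original tableau—call it $z$, in column $c$ of the next row—against $y$. Since $T$ is reverse row-strict, columns weakly decrease from top to bottom, so $z \le y$. When we insert $y$ into the next row, we look for the largest entry there that is $\le y$. Because $z$ sits in column $c$ of that row and satisfies $z \le y$, the bumping position in the next row is at column $c$ or, since rows strictly decrease from left to right, possibly strictly to the left of $c$ (if the largest entry $\le y$ occurs before column $c$). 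Either way, the insertion position moves weakly left.

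\medskip

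The main case to handle carefully is the terminal step, where $x$ is appended to the end of a row because no entry $\le x$ exists. Here I would argue that the entry being carried into that row is small enough relative to the row's existing entries that appending it at the right end still respects the weakly-left movement—specifically, I need to confirm that the column where the new cell is created is weakly left of the column affected in the row above. Because columns weakly decrease, the value carried down cannot exceed the entry above it, which forces the append position to be no further right than the position above.

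\medskip

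\textbf{The hard part} will be verifying the append/terminal case rigorously: when a brand-new cell is created at the end of a row, I must ensure its column index does not jump to the right of the previously affected column. The interior bumping case follows cleanly from the weak column decrease together with the strict row decrease, but the terminal case requires confirming that the shape growth is consistent with weakly-leftward motion, which depends on the interplay between the reverse row-strict conditions and the position of the last nonempty cell in each row. I expect this to reduce to the same column-comparison inequality $z \le y$ used in the interior case, applied at the boundary of the filled region.
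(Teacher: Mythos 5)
Your proposal is correct, and the inequality you isolate is the right engine: if $y$ is bumped from column $c$ of row $r$, then by the weak decrease of columns the entry $z$ directly below it satisfies $z \le y$, and since rows strictly decrease, the \emph{largest} entry of row $r+1$ that is $\le y$ is also the \emph{leftmost} such entry, hence lies in a column weakly left of $c$. One point of comparison with the paper: the paper gives no proof of this lemma at all --- it states it as a known property of dual Schensted insertion and points the reader to \cite{Knu73} and \cite{Sta99} --- so your write-up supplies the standard argument the paper omits rather than taking a different route from it.

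The one substantive remark is that the terminal case you flag as ``the hard part'' is in fact immediate, by contraposition from the very same comparison $z \le y$. Suppose $y$ is carried from column $c$ of row $r$ into row $r+1$ and the append case occurs, creating a new cell in column $\ell+1$, where $\ell$ is the length of row $r+1$. If $\ell+1 > c$, then $\ell \ge c$, so the cell $(r+1,c)$ exists and its entry $z$ satisfies $z \le T(r,c) = y$; but then row $r+1$ contains an entry $\le y$ and the insertion would bump rather than append --- a contradiction. So $\ell + 1 \le c$ and the new cell is weakly left of column $c$. Equivalently: whenever the cell directly below the bumped entry exists, the append case cannot arise, and whenever it does not exist, the append position $\ell+1 \le c$ automatically. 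With this observation written out, your interior-case argument plus this two-line boundary check constitutes a complete proof; no further analysis of the ``interplay'' at the boundary of the filled region is needed.
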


The following analogue of dual Schensted insertion provides a method for inserting a new cell into a RSCT.  Given an arbitrary RSCT $F$, let $\rea(F)$ be the {\it reading word} for $F$ given by reading the entries of $F$ by column from right to left, reading the columns from top to bottom.  This ordering of the cells is called the {\it reading order} on the cells of $F$.  The {\it modified reading word $\rea(\tilde{F})$} for $F$ is given by appending a cell containing the entry $0$ after the rightmost cell in each row to obtain $\tilde{F}$ and then recording the entries of $\tilde{F}$ in reading order.  (See Figure \ref{read} for an example.)

\begin{figure}
\begin{center}
\begin{picture}(200,80)
\put(0,60){$F=\tableau{2 & 1 \\ 2 \\ 4 & 3 & 2 \\ 4 & 2 \\ 5 & 2}$}
\put(150,60){$\tilde{F} = \tableau{2 & 1 & 0 \\ 2 & 0 \\ 4 & 3 & 2 & 0 \\ 4 & 2 & 0 \\ 5 & 2 & 0}$}
\end{picture}
\caption{$\rea(\tilde{F}) =00200  10  3  2  2  2  2 445$ }
{\label{read}}
\end{center}
\end{figure}

To insert an arbitrary positive integer $x$ into $F$, scan $\rea(\tilde{F})$ to find the first entry $y$ less than or equal to $x$ such that the entry immediately to the left of $y$ in $\tilde{F}$ is greater than $x$.  (If such a $y$ does not exist or if $y$ is in the leftmost column, place $x$ after the last entry smaller than or equal to $x$ in the leftmost column, shifting the lower rows down by one, and stop.)  If $y=0$, then replace $y$ by $x$ and stop.  Otherwise, replace $y$ by $x$ in 
which case we say that $x$ {\em bumps} $y$ and repeat the procedure using $y$ instead of $x$ and considering only the portion of $\rea(\tilde{F})$ appearing after $y$.  The resulting figure is denoted $F \leftharpoondown x$.  See Figure \ref{schensted} for an example. Note that $x$ followed by  all the elements which 
were bumped in the insertion procedure (read in reading order) must form a weakly decreasing sequence. 

\begin{figure}
\begin{center}
\begin{picture}(470,120)
\put(0,100){$3 \rightarrow \tableau{2 & 1 & 0 \\ 2 & 0 \\ 4 & 3 & 2 & 0 \\ 4 & 2 & 0 \\ 5 & 2 & 0}$}
\put(20,15){$3 \rightarrow$}
\put(0,0){$00200  10  {\bf 3}  2  2  2  2 445$}
\put(120,100){$\tableau{2 & 1 & 0 \\ 2 & 0 \\ 4 & {\bf 3} & 2 & 0 \\ 4 & 2 & 0 \\ 5 & 2 & 0}$}
\put(120,0){$3 \rightarrow {\bf 2} 2 2 2 4 4 5$}
\put(220,100){$\tableau{2 & 1 & 0 \\ 2 & 0 \\ 4 & 3 & 2 & 0 \\ 4 & {\bf 3} & 0 \\ 5 & 2 & 0}$}
\put(220,0){$2 \rightarrow  2 2 2 4 4 5$}
\put(320,100){$\tableau{2 & 1 & 0 \\ 2 & 0 \\ 4 & 3 & 2 & 0 \\ 4 & {3} & 0 \\ 5 & {\bf 2} & 0}$}
\put(320,0){$2 \rightarrow  2 2 4 4 5$}
\put(420,100){$\tableau{2 & 1  \\ 2  \\  {\bf 2} \\ 4 & 3 & 2  \\ 4 & 3 \\ 5 & 2 }$}
\put(420,0){$F \leftharpoondown 3$}
\end{picture}
\caption{The insertion procedure $F \leftharpoondown 3$}
\end{center}
{\label{schensted}}
\end{figure}

\begin{lemma}{\label{insRCT}}
The insertion procedure $F \leftharpoondown x$ produces an RSCT.
\end{lemma}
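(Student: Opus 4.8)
The plan is to verify directly that $F' = F \leftharpoondown x$ satisfies the three defining conditions of an RSCT, and everything rests on first recording the combinatorial structure of the bumping path, i.e.\ the sequence of cells $c_0, c_1, \dots$ at which the successive values $x = v_0 \ge v_1 \ge v_2 \ge \cdots$ are deposited. From the instruction to continue ``considering only the portion of $\rea(\tilde{F})$ appearing after $y$'' I would extract four facts as a short preliminary lemma: (a) the column indices of $c_0, c_1, \dots$ weakly decrease, and within a fixed column the row indices strictly increase (reading order runs down a column before moving left, and after a bump we only look further along the word); (b) the deposited values weakly decrease, which is exactly the remark recorded after the definition of $\leftharpoondown$; (c) no row is altered twice, since once a cell $(r,k)$ is bumped every entry of row $r$ to its left is $> v_0 \ge$ all later deposited values, so no later bump can occur in row $r$; and (d) at each bump the entry immediately to the left is unchanged and strictly exceeds the value deposited there.

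Conditions (1) and (2) then follow quickly. For (1), at a bump site the deposited value lies strictly below its left neighbour (unchanged by (c), larger by (d)) and strictly above its right neighbour, whose value was smaller than the old entry and hence smaller than the new one; an appended cell sits at the right end of a row whose previous last entry exceeds it by (d), and a freshly opened row has length one. For (2), the only modification to the leftmost column is the terminal insertion, and placing the new value immediately after the last column-$1$ entry that is $\le$ it keeps that column weakly increasing.

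The real work is condition (3), the row-strict triple rule, which I would verify for a triple $\hat{F'}(j,k), \hat{F'}(i,k), \hat{F'}(i,k-1)$ with $i<j$, $k \ge 2$, by cases on which cells lie on the bumping path, using (a)--(d) and the triple rule already valid in $F$. If both $(i,k)$ and $(j,k)$ are bumped, then $(i,k)$ precedes $(j,k)$ by (a), so its deposited value is at least that at $(j,k)$ by (b) and the hypothesis $\hat{F'}(j,k) > \hat{F'}(i,k)$ fails, making the implication vacuous. If $(i,k)$ is bumped but $(j,k)$ is not, then $(i,k-1)$ is unchanged by (c) and, since the old value at $(i,k)$ is at most its new value, the triple rule of $F$ applies verbatim. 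If $(j,k)$ is bumped (to $v$) but $(i,k)$ is not, I split on whether $(i,k-1)$ is bumped: if it is, its deposited value is at most $v$ by (b) (it occurs later, in column $k-1$), giving the inequality at once; if it is not, one applies the triple rule of $F$ at $(j,k)$ when the old entry there exceeds $F(i,k)$, and otherwise at the preceding column-$k$ bump site, whose old entry is precisely $v$, together with the scanning rule applied to $(i,k)$.

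The delicate case, which I expect to be the main obstacle, is when neither $(i,k)$ nor $(j,k)$ is bumped but $(i,k-1)$ is bumped, say to $v$: here the cell that grew is the \emph{left} neighbour, so a genuinely new violation could in principle arise and the ``first entry $y$'' scanning rule, not monotonicity, must be used. I would argue by contradiction, supposing $\hat{F'}(j,k) > \hat{F'}(i,k)$ yet $\hat{F'}(j,k) < v$. If $F(j,k-1) \le F(i,k-1)$, the triple rule of $F$ at $(j,k),(i,k),(i,k-1)$ gives $F(j,k) \ge F(i,k-1) \ge F(j,k-1) > F(j,k)$ by row-strictness, which is absurd. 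Otherwise $F(j,k-1) > F(i,k-1)$, and the triple rule at $(j,k-1),(i,k-1),(i,k-2)$ together with (d) at $(i,k-1)$ forces $F(j,k-1) \ge F(i,k-2) > v$; but the scanning rule, applied to the fact that the insertion of $v$ passed over $(j,k)$ without bumping it, yields $F(j,k) > v$ (excluded) or $F(j,k-1) \le v$, and the latter contradicts $F(j,k-1) > v$. Finally I would dispatch the terminal step: an appended cell forms triples only with zero-cells or with cells governed by (d), and opening a new length-one row merely shifts the rows below, preserving their relative vertical order so that all surviving triples are inherited from $F$ while the new zero-entries can never serve as the strictly larger lower cell. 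The one point demanding genuine care is ensuring that the ``passed over without bumping'' deduction is actually available, i.e.\ that $(j,k)$ really was scanned during the insertion of $v$; this is exactly where property (a) and a brief bookkeeping of where $v$ entered column $k$ or column $k-1$ must be invoked.
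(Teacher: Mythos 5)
Your architecture is the paper's own: conditions (1) and (2) by construction, then the triple rule by cases on which of $(i,k)$, $(j,k)$, $(i,k-1)$ lies on the bumping path, playing the weak decrease of deposited values and the scanning rule against the triple rule of $F$. (The paper compresses your first split: writing $a=\hat{F'}(i,k)<b=\hat{F'}(j,k)<c=\hat{F'}(i,k-1)$ and noting that affected entries weakly decrease in reading order while these three cells occur in the order $(i,k),(j,k),(i,k-1)$, at most one of the three can be affected.) Your cases where $(i,k)$ or $(j,k)$ is bumped are correct and match the paper. The genuine gap is exactly the point you flag, and it is not ``brief bookkeeping'': suppose $(i,k-1)$ receives $v$ with, for contradiction, $a<b<v$. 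Your derivation $F(j,k-1)\ge F(i,k-2)>v$ is right, but ``the insertion of $v$ passed over $(j,k)$'' fails whenever the origin of $v$ lies after $(j,k)$ in reading order. Origin $(r,k)$ with $r>j$ is one line: then the old entry $F(r,k)=v>b=F(j,k)$ forces $v\ge F(j,k-1)$ by the triple rule of $F$, contradicting $F(j,k-1)>v$. But origin in column $k-1$ above row $i$ is the hard subcase, where the paper spends most of its proof: $v$ may have cascaded down column $k-1$ through cells $(t_1,k-1),\dots,(t_s,k-1)=(i,k-1)$ with $t_1<\cdots<t_s$, and the scanning argument can only be applied to the value $f$ that first entered column $k-1$ at $(t_1,k-1)$. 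Before it applies you must establish $F(j,k-1)>f\ge b$, which needs an induction up the chain: from $F(j,k-1)\ge F(t_s,k-2)>F(t_{s-1},k-1)$ the triple rule of $F$ at rows $t_{s-1}<j$ gives $F(j,k-1)\ge F(t_{s-1},k-2)$, and iterating yields $F(j,k-1)\ge F(t_1,k-2)>f\ge F(t_1,k-1)\ge\cdots\ge v>b$; only then does $f$ either bump $b$ at $(j,k)$ (origin before $(j,k)$) or sit at some $(r,k)$ with $r>j$ and violate the triple rule of $F$. Nothing in your proposal covers the cascade $s\ge 2$, so as written the crux of the lemma is missing.

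A second, smaller flaw: in the terminal case, ``all surviving triples are inherited from $F$'' is false. When a new length-one row containing $v$ is opened in column $1$, the triples with the new row as the \emph{upper} row and $k=2$ are genuinely new: you must show $\hat{F'}(j,2)\ge v$ for every lower row $j$ with $\hat{F'}(j,2)>0$. This does hold by your own machinery --- such a row has $F(j,1)>v$ by the placement rule, so if $(j,2)$ (or its appended zero) was scanned with value $v$ it would have qualified and the insertion could not have ended in column $1$, while if it was not scanned then $v$ originated at some $(r,2)$ with $r>j$ and the triple rule of $F$ at rows $j<r$ gives $v\ge F(j,1)$, a contradiction --- but it must be said; the analogous check is needed when the terminal step replaces an appended zero, since the new nonzero entry can serve as the lower cell of a triple. (Also a slip in your justification of (c): entries left of a bump site exceed the value just inserted there, not necessarily $v_0$; that weaker bound is all you need.) With these repairs your proof becomes essentially the paper's; without them the hardest case and the new terminal triples are unproved.
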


\begin{proof}
The insertion procedure $F \leftharpoondown x$ satisfies the first and second RSCT conditions by construction.  To see that $F'=F \leftharpoondown x$ satisfies the triple condition, consider the supplemented tableau $\hat{F'}$ and select two entries $\hat{F'}(i,k), \hat{F'}(j,k)$ in the same column of $\hat{F'}$ such that $i < j$ and $\hat{F'}(j,k) > \hat{F'}(i,k)$.  Assume that $\hat{F'}(j,k) < \hat{F'}(i,k-1)$, for otherwise the triple condition is satisfied.  Set $a:=\hat{F'}(i,k), b:=\hat{F'}(j,k)$, and $c:=\hat{F'}(i,k-1)$.  The set of entries affected by the insertion procedure form a weakly decreasing sequence when read in reading order, which implies that at most one of the entries $a,b,c$ is affected by the insertion procedure since $a<b<c$.  

Now it cannot be that
$a$ bumped $F(i,k)$ since then column $k$ must have been 
part of $\hat{F}$ and it would be the case that 
$\hat{F}(i,k) \leq a < \hat{F}(j,k) =b < 
\hat{F}(i,k-1) = c$ 
which would violate the row-strict triple condition in $F$. 

We also claim that $b$ cannot be involved in the insertion 
procedure.  To see this, assume that $b$ is involved and argue by contradiction.  In this case, if column $k$ is 
not part of $\hat{F}$, then $b$ must have been 
the terminal cell in the insertion procedure and all 
the other elements in column $k$ must be 0. Thus, in particular, 
$a =0$.  In such a situation, we must have tried to insert 
$b$ in cell $(i,k)$ and the only reason that we did not 
place $b$ in cell $(i,k)$ must have been that $b \leq F(i,k-1) = c$ 
which violates our assumption. Thus it must have been 
that case that column $k$ is in $\hat{F}$. Then 
we claim that $b$ could not have bumped $\tilde{F}(j,k)$. 
Otherwise 
consider the element $x'$ which was being inserted by the time 
that we reached cell $(i,k)$ in the reading order. Since 
either $x' =x$ or $x'$ was bumped, we know that 
$x' \geq b > a$.  The only reason that 
$x'$ would not have bumped $a$ is if $x' \geq c$.  In particular, $x' \neq b$. 
Now suppose  that 
the set of elements that were bumped in column $k$ between 
row $i$ and row $j$ were $i < s_1 < \cdots < s_t < j$. Then it 
must be the case that $F(k,s_t) =b$. Thus let $r$ be the least 
element $p$ such that $F(s_p,k) < c$.  Now it cannot be 
that $F(s_r,k) > a$ since otherwise we would have 
$c = \hat{F}(i,k-1) > \hat{F}(s_r,k) > a =\hat{F}(i,k)$ which would violate 
the row-strict triple condition in $F$.  But then we would 
have that $a \geq F(s_r,k) \geq F(s_{r+1},k) \geq \cdots \geq F(s_t,k) = b$ 
since elements the elements which are bumped from a weakly decreasing 
sequence when read in reading order. This also violates our 
assumption so $b$ must not have been involved in the bumping 
process.

Thus we must assume that $c$ bumps $c' =F(i,k-1)$. Note that in 
this case we have $\hat{F}(i,k) =a < \hat{F}(j,k) =b$ so 
that we must have $c' \leq b < c$ by the row-strict triple condition 
for $F$. Let $d=F(j,k-1)$ and $e = F(i,k-2)$. 
The cells are arranged in $F$ as depicted below.
$$\tableau{e & c' & a \\ \\ & d & b}$$
Now $d > b \geq c'$ so that the row-strict triple condition 
for $F$ also implies that $d \geq e$. But since $c$ bumps $c'$, 
it must be the case that $e > c$. Thus we know that 
$d \geq e > c > b \ge c' > a$.  Now 
consider the question of where $c$ came from in the bumping 
process. It cannot be that $c =x$ or $c$ came from a cell 
before $(j,k)$ since otherwise we would insert 
$c$ into $(j,k)$ since we have shown 
that $d > c > b$.  Thus $c$ must have come from a cell 
after $(j,k)$. Now $c$ could not have come from a cell 
$(s,k)$ with $s > j$ since then we would have 
that $F(j,k) = b < F(s,k) = c < F(j,k-1) =d$ which would 
violate the row-strict triple condition in $F$.  
Thus $c$ must have been bumped 
from column $k-1$.  Now let $(t_1,k-1), \ldots, (t_s,k-1) = (i,k-1)$ 
be the cells in column $k-1$ whose elements were bumped 
in the insertion process weakly before bumping $c'$, where $t_1 < \cdots < t_s$.  Then 
we know that $s > 1$ and 
$$F(t_1,k-1) \geq \cdots \geq F(t_{s-1},k-1) = c > F(t_s,k-1) =c'.$$
Moreover since $F(t_p,k-1)$ bumps $F(t_{p+1},k-1)$ for 
$1 \leq p < s$, it must also be the case 
that $F(t_p,k-1) < F(t_{p+1},k-2)$ for $1 \leq p < s$.  
In particular, we know that 
$d = F(j,k-1) \geq e = F(t_s,k-2) > F(t_{s-1},k-1)$ so 
that the row-strict triple condition for $F$ implies 
that $d \geq F(t_{s-1},k-2)$.  But then 
$d \geq F(t_{s-1},k-2)> F(t_{s-2},k-1)$ so that 
row-strict triple condition for $F$ implies 
that $d \geq F(t_{s-2},k-2)$. Continuing on in 
this way, we can conclude that $d \geq F(t_1,k-2) > F(t_1,k-1)$. 
Now consider which element 
$f$ could have bumped $F(t_1,k-1)$.  This element $f$ must have come 
from column $k$ or higher. Now we know 
that 
$$d \geq F(t_1,k-2) > f \geq F(t_1,k-1) \geq F(t_s,k-1) = c > b.$$ 
Thus 
$f$ could not be $x$ or come from a cell before $(j,k)$ 
since otherwise we would try to insert $f$ in cell $(j,k)$ and 
since $d > f > b$, $f$ would bump $b$.  Thus 
$f = F(r,k)$ where $r >j$. But then 
we would have that $d= F(j,k-1)> F(r,k) =f > F(j,k) =b$ which 
would violate the row-strict triple condition for $F$.  
Thus we can conclude that $c$ also was not involved in 
the insertion procedure which would mean 
that $a$, $b$, and $c$ would violate the row-strict triple 
condition for $F$.  Hence, there can be no violation 
of the row-strict triple condition in $F'$, so 
$F'$ is a RSCT. 
\end{proof}

Our extension of the dual Schensted algorithm has a number of 
nice properties in addition to Ferreira's Littelwood-Richardson result \cite{Fer11}.  In particular, it is straightforward to prove 
that the following theorems.

\begin{theorem}{\label{insertioncommutes}}
The insertion procedure on RSCT commutes with the reverse row insertion in the sense that $\rho(T \leftarrow x) = \rho(T) \leftharpoondown x$.
\end{theorem}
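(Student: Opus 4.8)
The plan is to reduce the identity $\rho(T \leftarrow x) = \rho(T) \leftharpoondown x$ to a statement about column multisets, and then to verify that the two insertion procedures act identically on those multisets. First I would record the following uniqueness principle: an RSCT is completely determined by the multiset of entries in each of its columns. Indeed, the inverse map $\rho^{-1}$ of Lemma \ref{RCTtoTAB} simply sorts each column into weakly decreasing order, so it depends only on the column multisets; since $\rho$ and $\rho^{-1}$ are mutually inverse, two RSCTs with the same column multisets must coincide. Both sides of the desired identity are RSCTs: $\rho(T \leftarrow x)$ because $T \leftarrow x$ is again a reverse row-strict tableau and $\rho$ carries such tableaux to RSCTs (Lemma \ref{RCTtoTAB}), and $\rho(T) \leftharpoondown x$ by Lemma \ref{insRCT}. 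Because $\rho$ preserves the set of entries in each column (as shown in the proof of Lemma \ref{RCTtoTAB}), $\rho(T \leftarrow x)$ has the same column multisets as $T \leftarrow x$, and $\rho(T)$ has the same column multisets as $T$. Hence it suffices to prove that $T \leftarrow x$ and $\rho(T) \leftharpoondown x$ have identical column multisets; equivalently, that dual row insertion and the procedure $\leftharpoondown$ induce the same transformation on column multisets when started from tableaux with equal column multisets.

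Next I would describe the column-multiset transformation effected by dual row insertion. Writing the bumping sequence as $x = v_0 \ge v_1 \ge \cdots \ge v_r$ (it is weakly decreasing, since at each row the bumped entry is the largest one that is at most the value currently being inserted), and letting $(1,c_1),(2,c_2),\dots$ be the insertion path, Lemma \ref{rowinsertion} guarantees $c_1 \ge c_2 \ge \cdots$. Consequently the steps that occur in a fixed column $c$ form a contiguous block of rows, say indices $p,\dots,q$, and within that block the removed and inserted values telescope: the net effect on column $c$ is to delete $v_q$ and to insert $v_{p-1}$, the value that entered $c$ from the column immediately to its right (or the value $x$ in the first affected column). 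Thus dual row insertion performs a \emph{leftward relay}: each affected column receives one value from the right, emits a smaller value to the column on its left, and the final bumped value $v_r$ creates one new cell.

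I would then show that $\leftharpoondown$ performs exactly this same leftward relay on $\rho(T)$. The modified reading word $\rea(\tilde{F})$ visits the columns from right to left, reading each column top to bottom, so the scan in $F \leftharpoondown x$ processes columns in decreasing order of index, and the entries it bumps form a weakly decreasing sequence in reading order, matching $v_0 \ge v_1 \ge \cdots$ above. The key point to establish is that within each column the reading-word criterion---select the first entry $y \le x$ whose left neighbor exceeds $x$---removes and inserts exactly the same values as the corresponding block of dual row insertion. Here the RSCT structure is essential: the row-strict triple rule satisfied by $\rho(T)$, together with the fact that $\rho$ preserves column sets (Lemma \ref{RCTtoTAB}), forces the cell singled out by the reading word in column $c$ to hold precisely the value $v_q$ that dual insertion removes from $c$, and forces the relayed value $v_{p-1}$ to agree as well. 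I would carry this out by induction on the columns from right to left, the inductive hypothesis being that the value relayed into the next column to the left agrees in the two procedures.

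The main obstacle is exactly this per-column matching in the third step: the two insertions traverse the tableau in transverse directions---$\leftarrow$ proceeds top-to-bottom while relaying leftward, whereas $\leftharpoondown$ sweeps the columns right-to-left through the reading word---so aligning them requires a careful argument that the reading-word selection rule and the geometric ``largest entry $\le$ current value'' rule single out cells carrying equal values in each column. The triple condition is what makes these two selection rules agree, and verifying this compatibility---including the boundary cases where a new cell is appended, where the scan reaches an entry $0$, or where the leftmost column is extended and the lower rows shift down---is the technical core of the proof.
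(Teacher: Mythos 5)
The paper contains no proof of Theorem \ref{insertioncommutes} to compare against: it appears among results the authors merely declare ``straightforward to prove,'' accompanied only by an example, so your proposal must be judged on its own merits. Its framing is correct and genuinely useful as far as it goes. The uniqueness principle is a legitimate consequence of Lemma \ref{RCTtoTAB} (since $\rho^{-1}$ only sorts columns, an RSCT is determined by its column multisets), both sides of the identity are indeed RSCTs (Lemma \ref{insRCT} for the right-hand side), and your telescoping analysis of $T \leftarrow x$, resting on Lemma \ref{rowinsertion} and the weak decrease of the bumping sequence, correctly computes its net effect on each affected column: one value in from the right, one value out to the left. The parallel relay description of $\leftharpoondown$ is also sound, since the reading word sweeps columns right to left and never revisits a column.

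The genuine gap is the step you yourself flag: the claim that the triple rule ``forces'' the two relays to transport identical values through each column \emph{is} the theorem, and nothing in the proposal proves it. Worse, the induction you propose, whose only hypothesis is that the carried-in value agrees, is too weak as stated, because where a relay exits a column is governed by left-neighbor comparisons (in $T$, the bump leaves column $c$ when the largest eligible entry of the next row lies strictly left of $c$; in $\rho(T)$, when no remaining cell of column $c$ has left neighbor exceeding the current value), and $\rho$ does not preserve left-neighbor data cell by cell. Concretely, for the RSCT $F$ of Figure \ref{read}, the multiset of pairs (entry, left neighbor) in column $2$ of $\rho^{-1}(F)$ is $\{(3,5),(2,4),(2,4),(1,2)\}$, while in $F$ it is $\{(3,4),(2,4),(2,5),(1,2)\}$; relatedly, your assertion that the cell ``singled out by the reading word in column $c$ holds precisely the value $v_q$'' is false per cell (in the paper's example $F \leftharpoondown 3$, the first cell bumped in column $2$ holds $3$, while the value expelled from that column is $2$) --- only the \emph{net} inserted/removed values can be matched. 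What a real proof needs is a quantitative invariant of $\rho$ that survives partial execution of both insertions, for instance: for every value $w$ and every column $c \ge 2$, the number of cells of column $c$ with entry at most $w$ whose left neighbor exceeds $w$ is the same in $T$ and in $\rho(T)$ (this holds in the example above and is the kind of statement that would actually drive your right-to-left induction). Establishing such a lemma via the triple rule, and then checking the three terminal modes (replacing a $0$, insertion into the leftmost column with rows shifting down, and the row-end placement in $T \leftarrow x$) land the new cell in the same column, is where all the work lives; your proposal stops exactly at its threshold.
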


See Figure \ref{fig:counter} for an example of Theorem \ref{insertioncommutes}.

\begin{figure}
\includegraphics{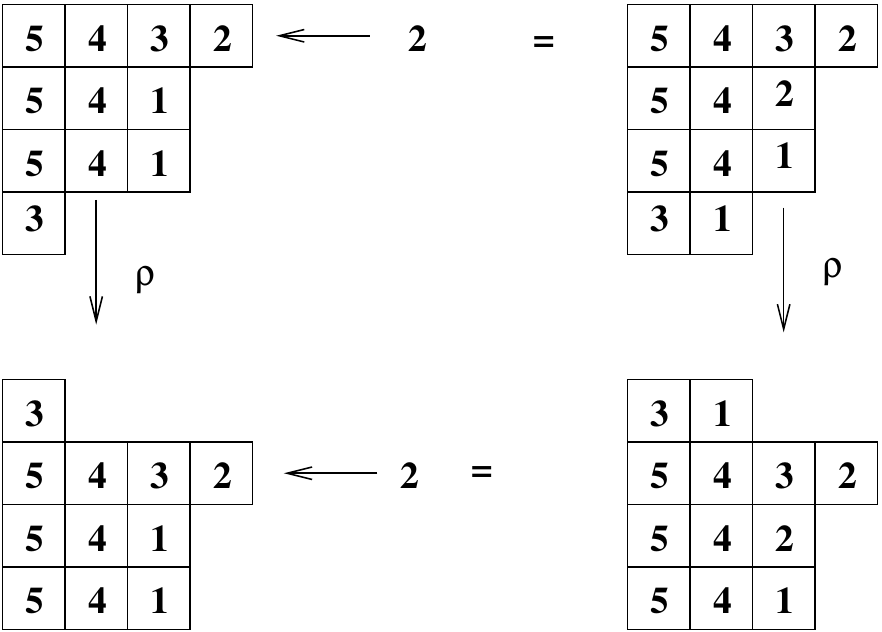}
\caption{An example of Theorem \ref{insertioncommutes}}
\end{figure}

\begin{theorem}
There exists a bijection between $\mathbb{N}$-matrices with finite support and pairs $(F,G)$ of row-strict composition tableaux which rearrange the same partition.
\end{theorem}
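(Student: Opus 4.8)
The plan is to construct an analogue of the Robinson-Schensted-Knuth correspondence using the extended dual Schensted insertion $\leftharpoondown$ defined in this section, mirroring the classical RSK bijection between $\mathbb{N}$-matrices of finite support and pairs of reverse row-strict tableaux of the same shape. Recall that classical dual RSK reads an $\mathbb{N}$-matrix $A=(a_{ij})$ as a two-line array (a biword) whose pairs $\binom{i}{j}$ are listed in a fixed order, and successively inserts the bottom entries into an insertion tableau $P$ while recording the order of growth in a recording tableau $Q$. I would carry out the same construction here, using $F \leftharpoondown x$ in place of ordinary dual row insertion to build the insertion tableau $F$, while the recording tableau $G$ tracks which cell is added at each step.

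First I would fix the reading convention for the biword associated to an $\mathbb{N}$-matrix, choosing the same ordering used in the classical dual RSK so that the correspondence restricts correctly. Second, I would define the forward map: starting from the empty RSCT, insert the successive letters of the biword via $\leftharpoondown$, obtaining after each insertion a new RSCT by Lemma \ref{insRCT}, and simultaneously grow a recording tableau $G$ by placing the index of the current biword entry into the cell newly created in the shape. The key structural input is Lemma \ref{insRCT}, which guarantees that each intermediate figure $F$ is a genuine RSCT, and Theorem \ref{insertioncommutes}, which shows that $\rho$ intertwines $\leftharpoondown$ with ordinary reverse row insertion $\leftarrow$. Using Theorem \ref{insertioncommutes} I would transport the whole insertion process through $\rho$ to the world of reverse row-strict tableaux, where the classical dual RSK bijection is already known to hold.

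The cleanest route is therefore to reduce to the classical case rather than reprove injectivity and surjectivity from scratch. Concretely, I would argue that applying $\rho^{-1}$ to the insertion tableau at each step converts my insertion sequence into the classical dual RSK insertion sequence on reverse row-strict tableaux (this is exactly the content of commutativity in Theorem \ref{insertioncommutes}), so that the pair $(\rho^{-1}(F),\rho^{-1}(G))$ is precisely the classical dual RSK output. Since $\rho$ is a weight-preserving bijection between reverse row-strict tableaux of shape $\lambda$ and RSCT's of shape $\alpha$ with $\shape(\alpha)=\lambda$ by Lemma \ref{RCTtoTAB}, composing my map with $\rho^{-1}$ on both coordinates yields a bijection if and only if the classical one does. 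The requirement that the output pair $(F,G)$ consist of RSCT's rearranging the same partition then follows because classical dual RSK produces a pair of reverse row-strict tableaux of the same partition shape $\lambda$, and $\rho$ sends shape $\lambda$ to compositions rearranging $\lambda$.

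The main obstacle is verifying that the recording tableau $G$ produced this way is itself an RSCT and that it carries enough information to reconstruct the insertion order, i.e. that the growth of the composition shape under repeated $\leftharpoondown$ insertions is compatible with an RSCT structure on $G$. In the classical setting the recording tableau is automatically standard (or semistandard) because new cells are added along an outer corner in a controlled way; here the composition shapes can grow in a less uniform fashion, and one must check that the cells added across insertions of equal biword letters form the appropriate horizontal strips so that $G$ satisfies the RSCT row-strict and triple conditions. I expect this compatibility to again reduce to Theorem \ref{insertioncommutes} together with the corresponding classical fact via $\rho^{-1}$, but confirming that $\rho$ transports the recording data correctly—so that no ambiguity arises in recovering the biword—is the delicate point that the full proof must address.
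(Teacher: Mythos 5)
Your proposal is correct and takes essentially the approach the paper intends: the paper states this theorem without a detailed proof, presenting it as a straightforward consequence of Lemma \ref{RCTtoTAB} and Theorem \ref{insertioncommutes}, namely transporting the classical dual RSK correspondence between $\mathbb{N}$-matrices and pairs of reverse row-strict tableaux of equal shape through the weight-preserving bijection $\rho$ applied to both coordinates, exactly as you describe. The ``delicate point'' you flag about growing the recording tableau $G$ cell-by-cell is not actually an obstacle to the statement as claimed, since the theorem asserts only the existence of a bijection and one may simply define $G := \rho(Q)$ for the classical recording tableau $Q$, with Theorem \ref{insertioncommutes} then guaranteeing that the insertion component of this bijection is computed directly by $\leftharpoondown$ on composition tableaux.
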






\bibliographystyle{plain}
\bibliography{hmrbib}

\end{document}